\newcommand{\bbN}{{\mathbb{N}}}
\newcommand{\bbR}{{\mathbb{R}}}
\newcommand{\bbC}{{\mathbb{C}}}
\newcommand{\no}{\nonumber}
\newcommand{\ti}{\tilde  }
\newcommand{\dom}{\text{\rm{dom}}}
\newcommand{\supp}{\text{\rm{supp}}}
\newcommand{\beq}{\begin{equation}}
\newcommand{\eeq}{\end{equation}}
\newcommand{\ba}{\begin{align}}
\newcommand{\ea}{\end{align}}
\numberwithin{equation}{section}
\newcommand{\C}{\mathbb{C}}
\renewcommand{\epsilon}{\varepsilon}
\newcommand{\N}{\mathbb{N}}
\renewcommand{\phi}{\varphi}
\newcommand{\R}{\mathbb{R}}
\DeclareMathOperator{\const}{const}
\DeclareMathOperator{\im}{Im}
\DeclareMathOperator{\sgn}{sgn}
\newtheorem{theorem}{Theorem}
\newtheorem{proposition}[theorem]{Proposition}
\newtheorem{lemma}[theorem]{Lemma}
\newtheorem{corollary}[theorem]{Corollary}
\theoremstyle{definition}
\newtheorem{definition}[theorem]{Definition}
\theoremstyle{remark}
\newtheorem*{remark*}{Remark}
\begin{document}

\title{Singular spectrum for radial trees}

\author{Jonathan Breuer}
\address{Jonathan Breuer, Mathematics 253-37, California Institute of Technology, Pasadena, CA 91125, USA}
\email{jbreuer@caltech.edu}

\author{Rupert L. Frank}
\address{Rupert L. Frank, Department of Mathematics,
Princeton University, Washington Road, Princeton, NJ 08544, USA}
\email{rlfrank@math.princeton.edu}

\begin{abstract}
We prove several results showing that absolutely continuous spectrum for the Laplacian on 
radial trees is a rare event. In particular, we show that metric trees with unbounded edges 
have purely singular spectrum and that generically (in the sense of Baire) radial trees have purely 
singular continuous spectrum.
\end{abstract}

\date{}

\maketitle


\sloppy

\section{Introduction and main results}

This paper deals with the absence of absolutely continuous spectrum for the Laplacian on radial trees. 
In particular, our purpose is to demonstrate that the existence of absolutely continuous spectrum imposes 
rather stringent restrictions on the structure of the tree, so that generally, the occurrence of absolutely 
continuous spectrum is a rather exceptional event. 
While our primary concern are metric trees, let us demonstrate the above first in the discrete setting.
   
Let $\Gamma_d$ be a radial discrete tree, that is, a rooted,
radially symmetric tree graph. Denote the root by $O$ and, for any
vertex $x$, the branching number of $x$ (that is, the number of forward
nearest neighbors of $x$) by $b(x)$. The symmetry implies that
$b(x)$ is a function of the distance of $x$ to the root. We only consider infinite trees. Therefore,
there exists a sequence of natural numbers, $\{b_n\}_{n=1}^\infty$,
such that if $n=d(O,x)$ then $b(x)=b_n$.

We have

\begin{theorem} \label{discrete}
Assume $\{b_n\}_{n=1}^\infty$ is bounded. Let $\Delta$ be the
discrete Laplacian on $\Gamma_d$. Then, if $\Delta$ has 
nonempty absolutely continuous spectrum, the sequence
$\{b_n\}_{n=1}^\infty$ is eventually periodic. That is, there exists
$N \in \bbN$ such that $\{b_n\}_{n=N}^\infty$ is a (one sided)
periodic sequence.
\end{theorem}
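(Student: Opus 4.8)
The plan is to reduce the problem to a spectral statement about a single half-line Jacobi matrix, and then to invoke Remling's theorem on the absence of absolutely continuous (a.c.) spectrum together with the rigidity of finite-alphabet reflectionless operators.

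First I would use the radial symmetry to decompose $\Delta$. Let $g_n$ denote the number of vertices at distance $n$ from $O$, so that $g_{n+1}=b_n g_n$. The subspace of spherically symmetric functions is invariant, and conjugating by $f\mapsto \sqrt{g_n}\,f$ turns the restriction of $\Delta$ to this subspace into a half-line Jacobi matrix $J=J^{(0)}$ whose off-diagonal entries are $a_n=\sqrt{b_n}$ (the diagonal, if one uses $D-A$, is determined by the degrees and hence again by $\{b_n\}$). The standard Naimark--Solomyak decomposition (also Allard--Froese, Carlson) then yields a unitary equivalence
\[
\Delta \;\cong\; \bigoplus_{m\ge 0}\bigl(J^{(m)}\bigr)^{\oplus k_m},
\]
where $J^{(m)}$ is the Jacobi matrix on the sites $\{m,m+1,\dots\}$ with the same coefficients $a_n=\sqrt{b_n}$ for $n\ge m$, and $k_m$ is a finite multiplicity coming from the antisymmetric ``angular'' functions born at level $m$. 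The key remark is that each $J^{(m)}$ has coefficient tail equal to a tail of the single sequence $(\sqrt{b_n})_n$; since the a.c. spectrum of a half-line Jacobi matrix depends only on the tail of its coefficients, $\sigma_{\mathrm{ac}}(J^{(m)})$ is independent of $m$, and therefore $\sigma_{\mathrm{ac}}(\Delta)=\sigma_{\mathrm{ac}}(J)$.

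Consequently, nonempty a.c. spectrum of $\Delta$ is equivalent to nonempty a.c. spectrum of the single bounded Jacobi matrix $J$, i.e. to the essential support $\Sigma_{\mathrm{ac}}$ of its a.c. part having positive Lebesgue measure. The decisive input is Remling's oracle theorem: if a bounded half-line Jacobi matrix has $\lvert\Sigma_{\mathrm{ac}}\rvert>0$, then every right limit of $J$ is reflectionless on $\Sigma_{\mathrm{ac}}$. This is exactly where boundedness of $\{b_n\}$ is used: since $b_n$ takes values in a finite set, the coefficients of $J$ range over a finite alphabet, so every right limit is a whole-line Jacobi matrix from that same alphabet which is reflectionless on a set of positive measure. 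A whole-line operator reflectionless on a positive-measure set and taking only finitely many coefficient values is necessarily periodic (finite-alphabet rigidity, in the spirit of Kotani's theorem for finitely-valued potentials).

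It remains to upgrade ``every right limit is periodic'' to ``$\{b_n\}$ is eventually periodic.'' Since any two-sided sequence of period at most $p$ over a finite alphabet is one of only finitely many, the set $\Omega$ of right limits --- closed, shift-invariant, and consisting entirely of periodic sequences of bounded period --- is in fact finite, hence a finite union of periodic orbits. A one-sided sequence whose $\omega$-limit set is such a finite set must eventually track a single orbit: otherwise infinitely many transitions between two distinct orbits would produce, in the limit, a non-periodic ``splice'' belonging to $\Omega$, a contradiction. This forces $\{b_n\}$ to be eventually periodic. I expect the genuine obstacle to be precisely this reflectionless-to-periodic mechanism --- correctly invoking Remling's theorem and then the finite-alphabet rigidity --- whereas the symmetry reduction of the first step, though it requires care with the multiplicities $k_m$, is essentially bookkeeping, and the deepest analytic fact (a.c. spectrum $\Rightarrow$ reflectionless right limits) is supplied by the cited theorem rather than proved here.
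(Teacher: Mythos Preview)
Your approach is essentially the same as the paper's: reduce via the radial decomposition to a half-line Jacobi matrix with coefficients $\sqrt{b_n}$ (this is Theorem~2.4 of \cite{breuer-sparse}), and then invoke Remling's finite-alphabet result (Theorem~1.1 of \cite{remling-disc}) to conclude eventual periodicity. The paper in fact gives no further argument beyond citing these two results, whereas you have additionally sketched the mechanism behind Remling's theorem; your sketch is sound, though the assertion that the periods of the right limits are \emph{bounded} deserves one more line---it follows from the uniform (oracle) version of determinism on the compact $\omega$-limit set, which is part of what Remling proves.
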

\begin{remark*}
Different authors use different definitions for $\Delta$. The
theorem holds both for

\beq \no (\Delta f)(x)=\sum_{y \sim x}f(y) \eeq (AKA the adjacency
matrix), and for
\beq \no (\Delta f)(x)= \left\{
\begin{array}{cc} \left(b(x)+1 \right)f(x)-\sum_{y \sim x}f(y) & x \neq O \\
\left(b(x)\right)f(x)- \sum_{y \sim x}f(y) & x=O \end{array} \right.
\eeq where $y \sim x$ means $y$ is a nearest neighbor of $x$. 
\end{remark*}

Regarding radial \emph{metric} trees (see, e.g., \cite{solomyak}
for an introduction to the concept of metric trees), a moment's
reflection shows that such a result can not hold since the edge lengths 
are now continuous parameters. We can, however, show that unbounded edge lengths does rule out absolutely 
continuous spectrum.

Explicitly, let $\Gamma_c$ be a radial metric tree with root $O$. As
before, let $b(x)$ be the branching number of a vertex $x$. We assume $b(O)=1$
and $b(x)>1$ for any other vertex. If $x$ is a vertex with $n+1$
vertices on the unique geodesic connecting it to $O$ (including the
endpoints), we denote $d(O,x)=t_n$ and $b(x)=b_n$. The parameters $b_n$ and $t_n$
are well defined because of the radial symmetry. We shall assume
that
\begin{enumerate}
\item $\inf_n \left(t_{n+1}-t_n\right)>0$.
\end{enumerate}

\begin{remark*}
In the metric tree literature, the trees we consider here are
usually called \emph{regular trees} \cite{naim-solom,solomyak}. Since this paper also 
has a theorem about discrete trees, where regularity usually means that every vertex has
the same number of neighbors, we chose the term \emph{radial} for
both settings as a unifying compromise.
\end{remark*}

Let $-\Delta$ be the operator in $L^2(\Gamma_c)$
defined through the quadratic form \beq \no \int_{\Gamma_c} |u'(x)|^2\,dx
\eeq for functions $u \in H^1_0(\Gamma_c)$, the Sobolev space of
functions continuous along the edges and satisfying $\int_{\Gamma_c}
\left(|u'(x)|^2+|u(x)|^2 \right)\,dx  <\infty$, $u(O)=0$. Functions
in the operator domain of $-\Delta$ satisfy Dirichlet boundary conditions at
the root and Kirchhoff boundary conditions at the vertices.

We shall prove

\begin{theorem}\label{continuous}
Under assumption (1) above, if 
\beq \no
\limsup_{n \rightarrow \infty}(t_{n+1}-t_n)=\infty \,,
\eeq 
then the spectrum of $-\Delta$ is purely singular, in the 
sense that any spectral measure for $-\Delta$ is supported on a set of Lebesgue measure zero.
\end{theorem}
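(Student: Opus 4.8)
The plan is to exploit the radial symmetry to decompose $-\Delta$ into a family of half-line operators, and then to kill the absolutely continuous spectrum of each by a transfer-matrix (Prüfer-variable) analysis. First I would invoke the standard decomposition for radial metric trees (as in \cite{naim-solom,solomyak}): $L^2(\Gamma_c)$ splits into an orthogonal sum of subspaces reducing $-\Delta$, on each of which $-\Delta$ is unitarily equivalent to a half-line operator. Representing a radial-type function on the subtree below a generation-$m$ vertex by a scalar function $\phi(t)$ and substituting $\psi=\sqrt{g}\,\phi$, where $g(t)$ is the number of edges at distance $t$, turns the weighted form $\int g\abs{\phi'}^2\,dt$ into $\int\abs{\psi'}^2\,dt$; the reduced operator is thus $L_m=-d^2/dt^2$ on $(t_m,\infty)$ with a Dirichlet condition at $t_m$ and, at each vertex $t_n$ ($n>m$), the point-interaction matching $\psi(t_n^+)=\sqrt{b_n}\,\psi(t_n^-)$, $\psi'(t_n^+)=b_n^{-1/2}\psi'(t_n^-)$. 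Assumption (1) guarantees $t_n\to\infty$ and that the point interactions do not accumulate, so each $L_m$ is a regular limit-point half-line operator to which subordinacy theory applies. Since the absolutely continuous part of a direct sum is the sum of the absolutely continuous parts, it suffices to show each $L_m$ has empty absolutely continuous spectrum; as the $L_m$ differ only in where they start, and $\limsup_{n\to\infty}\ell_n=\infty$ (with $\ell_n:=t_n-t_{n-1}$) holds for every tail, one argument covers all of them.

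Fix $E=\lambda^2$, $\lambda>0$. Over an edge the solutions of $-\psi''=\lambda^2\psi$ evolve by the free transfer matrix $T^{\mathrm{free}}_n(\lambda)$, and at $t_n$ by $B_n=\mathrm{diag}(b_n^{1/2},b_n^{-1/2})$, so the one-step transfer matrix is $M_n(\lambda)=B_n T^{\mathrm{free}}_n(\lambda)$, with $\det M_n=1$. I would pass to Prüfer variables $\psi=R\sin(\lambda t+\theta)$, $\psi'=\lambda R\cos(\lambda t+\theta)$: between vertices $R$ is constant and the phase advances freely, while at $t_n$
\[
\log R_{n}-\log R_{n-1}=\tfrac12\log\!\big(b_n\sin^2\alpha_n+b_n^{-1}\cos^2\alpha_n\big),\qquad \tan\alpha_n^{+}=b_n\tan\alpha_n ,
\]
where $\alpha_n$ is the phase on arrival at $t_n$. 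Thus $\log\|M_N\cdots M_{m+1}\|$ is governed by the Birkhoff-type sum $\sum_n f_{b_n}(\alpha_n)$ with $f_b(\alpha)=\tfrac12\log(b\sin^2\alpha+b^{-1}\cos^2\alpha)$. The decisive algebraic fact is that the $\alpha$-average of $f_b$ is strictly positive: using the identity $\tfrac1{2\pi}\int_0^{2\pi}\log(p^2\cos^2\theta+q^2\sin^2\theta)\,d\theta=2\log\tfrac{p+q}{2}$ with $p=b^{-1/2}$, $q=b^{1/2}$, the mean equals $\log\tfrac{b^{1/2}+b^{-1/2}}{2}>0$ because $b_n>1$. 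Note that each individual $\|M_n(\lambda)\|$ is bounded (uniformly in $n$, for fixed $\lambda$), even across the long edges, so any growth of the product is a genuinely dynamical, alignment-driven effect rather than the contribution of a single large factor.

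The heart of the proof is therefore to show that for Lebesgue-a.e.\ $\lambda>0$ the sum $\sum_n f_{b_n}(\alpha_n)$ diverges to $+\infty$, i.e.\ the transfer matrices are unbounded. This is exactly where the hypothesis enters. The phase iteration $\alpha_{n+1}=\arctan(b_n\tan\alpha_n)+\lambda\ell_{n+1}$ contains the free increments $\lambda\ell_{n+1}$, and since $\limsup_n\ell_n=\infty$ there are infinitely many edges across which this increment, as $\lambda$ ranges over a compact interval, sweeps through arbitrarily many periods. This decorrelates the arriving phase from the ``bad'' set near $0$ and $\pi$ (where $f_{b_n}<0$) and forces the empirical phase distribution at the long edges toward equidistribution, so that each long edge contributes, in the mean, a positive amount; accumulation over the infinitely many long edges drives the sum to $+\infty$. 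I would make this rigorous by an $L^2(d\lambda)$ (second-moment) estimate on the partial sums together with a Borel--Cantelli argument, bounding the fluctuations of $\sum_n f_{b_n}(\alpha_n)$ around its positive mean by $o$ of the mean for a.e.\ $\lambda$. The main obstacle is precisely this a.e.-$\lambda$ control of a Birkhoff-type sum for a deterministic, only-$\limsup$-sparse sequence of edge lengths: the oscillatory-integral and variance bookkeeping, rather than the algebra above, is the delicate part, and the reason one cannot simply argue edge-by-edge is that the per-vertex increments change sign.

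Finally, once $\|M_N\cdots M_{m+1}(\lambda)\|\to\infty$ for a.e.\ $\lambda>0$, I would invoke subordinacy theory in the Last--Simon form: the essential support of the absolutely continuous part of $L_m$ is contained, up to a Lebesgue-null set, in the set of energies $E=\lambda^2$ for which the transfer matrices stay bounded in the averaged sense $\liminf_{L}\tfrac1L\int_0^L\|T(\cdot,E)\|^2<\infty$. Our divergence makes this set null, so each $L_m$ has no absolutely continuous spectrum; consequently $-\Delta$ has none, i.e.\ every spectral measure for $-\Delta$ is supported on a set of Lebesgue measure zero.
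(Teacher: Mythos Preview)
Your reduction to the half-line operators $L_m$ (the paper's $A_k^+$) is correct and coincides with the paper's first step. After that, the paper proceeds very differently, and your sketch has a real gap at exactly the place you label ``the main obstacle''.

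The hypothesis is only $\limsup_n \ell_n=\infty$; between two consecutive long edges there may sit an \emph{arbitrary} block of short edges, with arbitrarily many vertices and---since Theorem~\ref{continuous} imposes no upper bound on the $b_n$---arbitrarily large branching numbers. Your parenthetical claim that each $\|M_n(\lambda)\|$ is bounded uniformly in $n$ is therefore false in general. More importantly, the second-moment/Borel--Cantelli scheme you outline needs quantitative control on the transfer matrix over each short-edge block: in the classical sparse-barrier analyses the barriers come from a precompact family, which yields a uniform positive lower bound on the phase-averaged log-growth and a uniform upper bound on its variance. Here neither is available. If $P_k(\lambda)\in SL(2,\bbR)$ denotes the transfer matrix across the $k$-th block and $\sigma_k(\lambda)=\|P_k(\lambda)\|$, then equidistribution of the arriving phase after a long edge gives only that the mean contribution of the next block is $\log\tfrac{\sigma_k+\sigma_k^{-1}}{2}\geq 0$, with variance of order $(\log\sigma_k)^2$. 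Nothing in the hypotheses prevents $\sigma_k(\lambda)$ from being as close to $1$ as one likes (so the means fail to sum) or arbitrarily large (so the variance swamps the mean). Summing $f_{b_n}(\alpha_n)$ over \emph{all} vertices is worse: only the phase immediately after a long edge is decorrelated, while the vast majority of the terms---those at the short edges in between---are not governed by your equidistribution step at all. I do not see how to close this without additional assumptions such as $\sup_n b_n<\infty$ together with a bound on the number of vertices between consecutive long edges, neither of which the theorem provides.

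The paper sidesteps all transfer-matrix bookkeeping by invoking Remling's theorem. If $\Sigma_{\rm ac}(A^+)$ had positive measure, every right-limit of $\mu$ would be reflectionless on it. Choosing $s_j$ in the middle of longer and longer edges produces a right-limit $\hat\mu$ vanishing on $(-\infty,0]$ but not on $(0,\infty)$; hence $\hat m_-(E+i0;0)=i\sqrt{E}$, and reflectionlessness forces $\hat m_+(E+i0;0)=i\sqrt{E}$ on a set of positive measure, so $\hat m_+\equiv i\sqrt{z}$ by Herglotz uniqueness. A Borg--Marchenko-type uniqueness for the jump-condition operators (Proposition~\ref{unique}, the paper's new ingredient) then gives $\hat\mu|_{(0,\infty)}=0$, a contradiction. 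This argument is soft---it needs no information whatsoever about what happens between the long edges---which is precisely why it succeeds under the bare hypothesis $\limsup_n\ell_n=\infty$.
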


Put differently, this theorem says that \emph{any} subsequence of unbounded edges destroys absolutely continuous spectrum, 
no matter what happens between these unbounded edges. 
The trees described in Theorem \ref{continuous} are sparse in the sense that their branchings become sparse as the 
distance from the root increases (at least along some sequence). The fact that sparse potentials for one-dimensional
Schr\"odinger operators may lead to singular spectral measures was discovered by Pearson \cite{pearson} thirty years 
ago and has been explored extensively since then (see \cite{last-review} and references therein). The paper  
\cite{remling-disc}, whose main result is one of the principal ingredients in our analysis, has a remarkably general 
result (Corollary 1.5 there) in this area. 

A restricted family of discrete sparse trees was 
studied in \cite{breuer-sparse, breuer-mol}, where it was shown that by controlling the various parameters defining 
the tree, it is possible to control also the `degree of continuity' of the spectral measures. Theorem \ref{continuous} 
above is much softer, but is considerably more general. It is also, to the best of our knowledge, the first theorem 
of this type to be proven for metric trees. We find it remarkable that such a small class of radial metric trees
has absolutely continuous spectrum. The next two theorems show that pure point spectrum does not occur too often as well.

\begin{theorem}\label{simonstolz}
Assume that the sequence $\{b_n\}$ is bounded. Then, if 
\beq \label{sparseness}
\limsup_{n \rightarrow \infty}\frac{t_{n+1}-t_n}{n^{2n}}>0 \,,
\eeq
the spectrum of $-\Delta$ coincides with $[0,\infty)$ and is purely singular continuous.
\end{theorem}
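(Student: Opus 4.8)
The plan is to establish three separate facts about $-\Delta$: that its spectrum equals $[0,\infty)$, that it has no absolutely continuous part, and that it has no eigenvalues. The absence of absolutely continuous spectrum is immediate: condition~\eqref{sparseness} forces $\limsup_{n\to\infty}(t_{n+1}-t_n)=\infty$, so Theorem~\ref{continuous} applies and already yields purely singular spectrum. The inclusion $\spec(-\Delta)\subseteq[0,\infty)$ follows from the positivity of the quadratic form. For the reverse inclusion I would fix $E=k^2>0$ and exploit the sparseness directly: along the subsequence realizing \eqref{sparseness} there are edges of length $t_{n+1}-t_n\to\infty$ carrying no branching, on which $-\Delta$ acts as the free Laplacian, and a normalized oscillating bump $\phi_n(x)=\chi\!\left((x-t_n)/(t_{n+1}-t_n)\right)\sin(k(x-t_n))$ supported strictly inside such an edge is a Weyl sequence with $\|(-\Delta-E)\phi_n\|/\|\phi_n\|\to 0$. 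Hence $E\in\spec(-\Delta)$, and closedness of the spectrum gives $0\in\spec(-\Delta)$ too, so $\spec(-\Delta)=[0,\infty)$.

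The real work is the absence of eigenvalues, and here I would first pass to the standard radial decomposition (following Naimark--Solomyak and Solomyak): under assumption~(1), $-\Delta$ is unitarily equivalent to a countable orthogonal sum of half-line operators, each of the form $-d^2/dx^2$ with a Dirichlet condition at the left endpoint and point (matching) conditions at the branching points $t_n$. Since the eigenvalues of $-\Delta$ are exactly the union of the eigenvalues of these summands, it suffices to rule out eigenvalues for one such operator $H$. The decisive structural point is that $\{b_n\}$ is bounded, so the matching condition at $t_n$ is described by a transfer matrix $M_n$ with $\|M_n\|$ and $\|M_n^{-1}\|$ bounded uniformly in $n$.

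Now fix $E=k^2>0$ and let $T_N$ be the transfer matrix of $H$ from $0$ to $t_N$. Across a branchless interval the transfer matrix is a rotation whose norm and inverse norm are bounded by a constant $C(k)$ depending on $k$ alone, and across $t_n$ it equals $M_n$; therefore $\|T_N\|,\|T_N^{-1}\|\le C(k)^N$, and any nonzero solution $u$ of $-u''=Eu$ obeying the boundary condition at $0$ satisfies $|u(t_N^+)|^2+|u'(t_N^+)|^2\ge C(k)^{-2N}$. On the free interval $(t_N,t_{N+1})$ one has $u(x)=u(t_N^+)\cos(k(x-t_N))+k^{-1}u'(t_N^+)\sin(k(x-t_N))$, so $\int_{t_N}^{t_{N+1}}|u|^2\,dx\ge c(k)\,C(k)^{-2N}(t_{N+1}-t_N)$ for some $c(k)>0$. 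Along the subsequence realizing \eqref{sparseness} the right-hand side is at least $c(k)\,c\,\bigl(N^2/C(k)^2\bigr)^N$, which tends to infinity because $N^{2N}$ outgrows $C(k)^{2N}$ for every fixed $k$. Hence no nonzero solution lies in $L^2$, so $E$ is not an eigenvalue; equivalently $\int_0^\infty\|T(E,x)\|^{-2}\,dx=\infty$ and the Simon--Stolz criterion applies. The value $E=0$ is treated separately, the relevant solutions being affine and hence not square-integrable.

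The main obstacle I anticipate is bookkeeping rather than conceptual: writing down the reduced half-line operators and their matching conditions at the $t_n$ precisely (including the $\sqrt{b_n}$-type factors), verifying that the associated transfer matrices $M_n$ and their inverses are genuinely uniformly bounded, and checking that the growth estimate above holds simultaneously for every summand of the decomposition, so that none of them contributes an eigenvalue. Granting this, combining the three facts shows that $\spec(-\Delta)=[0,\infty)$ with neither absolutely continuous spectrum nor eigenvalues, that is, purely singular continuous spectrum filling $[0,\infty)$.
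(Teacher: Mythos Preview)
Your proposal is correct and follows essentially the same route as the paper: reduce via the radial decomposition to the half-line operators $A_k^+$, invoke Theorem~\ref{continuous} (equivalently, Theorem~\ref{delta-operator}) for the absence of absolutely continuous spectrum, and exclude positive eigenvalues by bounding the transfer matrices at energy $E=k^2>0$ exponentially in $n$ so that the Simon--Stolz criterion $\int_0^\infty \|T(x,0,E)\|^{-2}\,dx=\infty$ applies. The paper's write-up differs only cosmetically, quoting \cite{simon-stolz} directly rather than rederiving the lower bound on $\int_{t_n}^{t_{n+1}}|u|^2$, and simply asserting that $\spec(A^+)=[0,\infty)$ rather than spelling out your Weyl-sequence construction.
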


For any $\varepsilon>0$, $C>0$ we consider the set $\mathcal{T}^{\varepsilon,C}$ of radial trees whose 
defining parameter sequences, $\{t_n,b_n\}$, satisfy
\begin{enumerate}
\item[$(1)_\epsilon$] $\inf \left(t_{n+1}-t_n\right)\geq\varepsilon,\qquad t_1 \geq \varepsilon$.
\item[$(2)_C$] $\sup b_n \leq C$.
\end{enumerate}
Moreover, we allow $\{t_n,b_n\}$ to be a finite (possibly empty) sequence. This means that we also consider trees which have only a finite number of vertices and which contain half-lines.

Identifying sequences $\{t_n,b_n\}$ with measures $\sum_n \beta_n \delta_{t_n}$ 
(with $\beta_n=\frac{\sqrt{b_n}+1}{\sqrt{b_n}-1}$)
we can consider 
$\mathcal{T}^{\varepsilon,C}$ as a (compact) metric space with convergence being induced from weak convergence of measures.  
This convergence is natural for us since, as we shall see, 
convergence of the trees implies strong resolvent convergence 
of the corresponding Laplacians (up to a natural unitary transformation taking the 
different Hilbert spaces into account).

{F}rom Theorem \ref{simonstolz} we shall deduce the somewhat surprising

\begin{theorem}\label{wonderland}
In the space $\mathcal{T}^{\varepsilon,C}$, with the topology of weak convergence for the corresponding measures,
the set of trees whose spectrum on $[0,\infty)$ is purely singular continuous, is a dense $G_\delta$ set.
\end{theorem}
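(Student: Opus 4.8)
The plan is to realize Theorem~\ref{wonderland} as an application of Simon's Wonderland theorem, with Theorem~\ref{simonstolz} supplying the density that drives it. The space $\mathcal{T}^{\varepsilon,C}$ is compact, hence a complete metric space, and, as asserted above, the map sending a tree to its Laplacian $-\Delta$ is continuous into the self-adjoint operators in the strong resolvent topology, once the spaces $L^2(\Gamma_c)$ are identified through the natural unitary transformations onto a common Hilbert space. Granting this, I would fix a countable family of vectors $\{\phi_j\}$ that is total in the common space and, for each vector and each open interval $(a,b)\subset(0,\infty)$ with rational endpoints, work with the spectral measure $\mu_{x,\phi_j}$ of $-\Delta$ at the tree $x$. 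Strong resolvent continuity makes $x\mapsto\mu_{x,\phi_j}$ continuous in the weak topology of measures, and the two structural facts behind the Wonderland theorem then apply: in that weak topology, the set of measures with no atom in $(a,b)$ is a $G_\delta$ (an elementary consequence of the Portmanteau inequality $\mu([\lambda-\delta,\lambda+\delta])\ge\limsup_k\mu_k([\lambda-\delta,\lambda+\delta])$), and the set of measures that are purely singular on $(a,b)$ is a $G_\delta$ (the substantive input, via concentration on open sets of small Lebesgue measure). Pulling these back through $x\mapsto\mu_{x,\phi_j}$ and intersecting over $j$ and over the countably many intervals exhausting $(0,\infty)$, I obtain that the set $\mathcal C$ of trees with no eigenvalues in $(0,\infty)$ and the set $\mathcal S$ of trees with no absolutely continuous spectrum in $(0,\infty)$ are each $G_\delta$, and hence so is their intersection, which is exactly the set of trees whose spectrum on $[0,\infty)$ is purely singular continuous.

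It then remains to show this intersection is dense, and this is where Theorem~\ref{simonstolz} is used. Any tree whose parameters obey the sparseness condition \eqref{sparseness} (with $(1)_\epsilon$ and $(2)_C$) has, by that theorem, spectrum $[0,\infty)$ which is purely singular continuous, so the whole family of such sparse trees lies in $\mathcal S\cap\mathcal C$. To see these are dense, I would take an arbitrary tree with measure $\sum_n\beta_n\delta_{t_n}$, retain its first $N$ branch points unchanged, and prolong the sequence beyond $t_N$ by edges whose lengths grow so fast that \eqref{sparseness} is forced (for instance exceeding $n^{2n}$ along a subsequence), all the while keeping consecutive gaps $\ge\varepsilon$ and branching numbers in $\{2,\dots,C\}$. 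Since the altered branch points sit at distances that tend to infinity as $N\to\infty$, the corresponding measures converge to the original one in the weak topology defining $\mathcal{T}^{\varepsilon,C}$; the degenerate members of the space (finite trees and pure half-lines) are approximated the same way, by letting all branch points escape to infinity. Thus the sparse trees form a dense subset of $\mathcal S\cap\mathcal C$, and a $G_\delta$ set containing a dense subset is itself a dense $G_\delta$, which is the claim; completeness of $\mathcal{T}^{\varepsilon,C}$, through Baire's theorem, ensures this set is genuinely large.

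I expect the real difficulty to lie entirely in the hypotheses feeding this otherwise soft argument, and specifically in fitting the varying-Hilbert-space situation into the Wonderland framework. Because distinct trees are built on distinct metric graphs, one must first transport all the Laplacians onto one Hilbert space---naturally via the decomposition of a radial tree operator into a direct sum of weighted half-line problems---and then verify that weak convergence of the measures $\sum_n\beta_n\delta_{t_n}$ indeed yields strong resolvent convergence of the transported operators, with uniform control as one approaches the spectral threshold $0$ and a careful treatment of the limits in which vertices disappear to infinity or the tree degenerates. Granting that convergence, the weak continuity of the spectral measures, the two $G_\delta$ properties, and the density argument are comparatively routine, so the strong resolvent convergence is the step I would expect to absorb most of the work.
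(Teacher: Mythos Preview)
Your proposal is correct and follows essentially the same approach as the paper: reduce to the one-dimensional operators via the direct sum decomposition, apply Simon's Wonderland theorem there using strong resolvent convergence, supply density via the sparse trees of Theorem~\ref{simonstolz}, and combine via Baire. The varying-Hilbert-space issue you flag as the main difficulty is resolved in the paper exactly by performing the decomposition first---each summand $A_k^+$ lives on the fixed space $L^2(\R_+)$ (after a shift), so the Wonderland machinery and the strong resolvent convergence (Proposition~\ref{strongresconv}, proved cleanly from the resolvent formula of Lemma~\ref{res}) are applied at that level rather than to $-\Delta$ directly; the tree result then follows since purely singular continuous spectrum for $-\Delta$ is equivalent to the same for every $A_k^+$, and a countable intersection of dense $G_\delta$'s is a dense $G_\delta$.
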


Recall that a $G_\delta$ set is a countable intersection of open sets.

As with other works dealing with radial trees (see e.g.\ \cite{breuer-sparse, breuer-mol, carlson, naim-solom, solomyak}), 
the radial symmetry reduces the analysis to a one-dimensional 
problem. Thus, the exclusion of eigenvalues in Theorem \ref{simonstolz} follows by a Simon-Stolz type argument 
\cite{simon-stolz} and Theorem~\ref{wonderland} follows from Theorem \ref{simonstolz} with the help of Simon's 
Wonderland Theorem~\cite{simon-wonderland} applied to the corresponding families of one-dimensional operators. 

The new ingredient which enters in the proofs of Theorems 
\ref{discrete} and \ref{continuous} is a recent theorem of Remling's \cite{remling-disc, remling-cont} 
following work by Breimesser and Pearson \cite{bp1, bp2}. Remling's Theorem leads to various 
explicit restrictions on one-dimensional discrete and continuous Schr\"odinger 
operators with absolutely continuous spectrum. The structural restrictions on trees with 
absolutely continuous spectrum are a consequence of these restrictions. 
In particular, Theorem \ref{discrete} is an immediate corollary of Theorem 1.1 of \cite{remling-disc}, 
given Theorem 2.4 of \cite{breuer-sparse}.

In contrast with the discrete case, Theorem \ref{continuous} is not
an immediate consequence of the results of \cite{remling-disc} or
its continuous counterpart \cite{remling-cont}. The difficulty lies
in the fact that the objects appearing in the direct sum
decomposition of $-\Delta$ (see, e.g., \cite{carlson, naim-solom})
are not `standard' Schr\"odinger operators, but rather
Sturm-Liouville operators on weighted $L^2$ spaces with rather singular weights.

The better part of the rest of this paper is devoted to demonstrating the 
applicability of Remling's Theorem to these operators. A crucial point in the 
analysis is the proof that for a whole-line potential that is reflectionless
(see Section 4.1 for the definition) on a set of positive Lebesgue
measure, the part of the potential lying to the left of $0$ uniquely
determines the part lying to the right of $0$. In the context of
Jacobi matrices and Schr\"odinger operators with measure valued
potentials this is, indeed, a simple realization relying on
classical results. In our case, however, this seems to be a new
result (in particular, see Proposition \ref{unique}). In order to prove this, we have made use of the
Kre\u\i n formula for the difference of the resolvents of two
different self-adjoint extensions of a closed, densely defined,
symmetric operator, as it appears in \cite{posilicano} (see Section 3).
We are not aware of any previous application of this formula in
the spectral theory of Schr\"odinger operators on radial trees, and we believe that it may be useful also beyond the 
context of the present paper. 

Examples of trees, and more general graphs, for which the graph Laplacian has singular spectrum, 
have been constructed before \cite{breuer-sparse, breuer-mol, hislop-post, teplyaev, graph-lap, teplyaev1}. 
However, to the best of our knowledge, the theorems above are the first of their kind in terms of the generality 
in which they hold.
 In particular, we do not know of another Wonderland-type theorem for trees. Interestingly enough, it is not clear 
how to formulate an interesting analogue of Theorem \ref{wonderland} for discrete trees. 
The reason is that in order to exclude eigenvalues one needs a `free operator' which approximates the tree. 
The natural free operator in the discrete case is the discrete Laplacian which has spectrum in [-2,2]. Since, 
in general, discrete trees might have a significant portion of their spectrum outside [-2,2], the exclusion of 
eigenvalues there does not have the implications of Theorem \ref{simonstolz}.

The rest of this paper is structured as follows. Section 2 describes the reduction of the above theorems to theorems for 
one-dimensional Schr\"odinger operators with point interactions. Section 3 proves a resolvent formula and 
a uniqueness result for such operators. Section 4 completes the proof of the theorems. As noted above, Theorem 
\ref{discrete} is a direct consequence of Theorem 1.1 in \cite{remling-disc}. Thus, no additional discussion will be 
devoted to its proof.

\textbf{Acknowledgments}.
We are grateful to Barry Simon for useful discussions. 
RF appreciates the warm hospitality of Caltech, where part of this work has been done, and acknowledges support through DAAD grant D/06/49117.


\section{Reduction to the one-dimensional case}\label{sec:reduction}

Using the radial symmetry of the tree we shall deduce our main theorems from results about one-dimensional operators. In this section we describe this reduction and state the corresponding theorems in the one-dimensional context.

As in the previous section, let $\Gamma_c$ be a radial metric tree associated with parameters $\{(t_n,b_n)\}_{n=1}^\infty$, which are assumed to satisfy (1). We put $(t_0,b_0):=(0,0)$. For any integer $k\geq 0$, we introduce the self-adjoint operator $A_k^+$ in $L^2(t_k,\infty)$ defined by $(A_k^+ f)(r)=-f''(r)$ for $r\in \cup_{n=k}^\infty (t_n,t_{n+1})$ with domain consisting of all functions 
$f\in H^2\left(\cup_{j=k}^\infty (t_n,t_{n+1})\right)$ satisfying $f(t_k)=0$ and 
\beq \label{boundaries}
f(t_n+)=\sqrt{b_n}f(t_n-), \qquad f'(t_n+)=\frac{1}{\sqrt{b_n}}f'(t_n-)
\eeq
for $n>k$. These operators appear naturally in the direct sum decomposition of the Laplacian \cite{carlson,naim-solom,solomyak}. Indeed, one has

\begin{proposition}[\cite{solomyak}]\label{decomposition}
$-\Delta$ is unitarily equivalent to the direct sum \beq
\label{directsum} -\Delta \cong A^+_0\oplus \sum_{k=1}^\infty \oplus
\left( A_k^+ \otimes I_{\bbC^{b_1\cdots b_{k-1}(b_k-1)}} \right) .\eeq
\end{proposition}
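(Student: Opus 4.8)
The plan is to implement the standard decomposition of the Laplacian on a radial tree into a direct sum of Sturm--Liouville operators on half-lines, organized by the combinatorial structure of the tree. The radial symmetry is the key: at each generation, the edges emanating from a given vertex are interchangeable, so the Laplacian commutes with the group of symmetries that permute these edges. I would decompose $L^2(\Gamma_c)$ into subspaces on which these symmetries act trivially (the ``symmetric'' part) versus those on which they act nontrivially (the ``antisymmetric'' parts at each branching level), and identify the restriction of $-\Delta$ to each piece.

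First I would set up coordinates. Each point of $\Gamma_c$ lies on a unique edge and has a well-defined distance $r$ to the root, and radial symmetry means $-\Delta$ preserves the space of functions depending only on $r$ along with the combinatorial branch data. I would introduce, for each vertex $v$ at distance $t_k$, the local averaging and difference operators over the $b_k$ forward edges at $v$. Functions that are constant across the forward edges at \emph{every} vertex form the fully symmetric subspace, and on it the restriction of $-\Delta$ is unitarily equivalent to $A_0^+$ on $L^2(t_0,\infty)=L^2(0,\infty)$; the weight $\sqrt{b_n}$ in the matching condition \eqref{boundaries} arises precisely from the $L^2$ normalization when one collapses the $b_n$ equal branches into a single half-line (a function living on $b_n$ identical copies carries a factor $\sqrt{b_n}$ relative to its restriction to one copy). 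This normalization is exactly what turns the Kirchhoff conditions into the transmission conditions $f(t_n+)=\sqrt{b_n}f(t_n-)$, $f'(t_n+)=\frac{1}{\sqrt{b_n}}f'(t_n-)$.

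Next I would treat the antisymmetric subspaces. Fix a vertex $w$ at distance $t_{k-1}$ (generation $k-1$), so $w$ has $b_{k-1}$ forward edges beginning at distance $t_{k-1}$; on the subtrees hanging beyond $w$, consider functions supported past $t_k$ that sum to zero when averaged appropriately across the $b_k$ subbranches at each of $w$'s forward neighbors, so that they are orthogonal to the symmetric part but satisfy a Dirichlet condition $f(t_k)=0$ at the branching vertex where the vanishing-average condition forces the function to die. Such functions reduce to the operator $A_k^+$ on $L^2(t_k,\infty)$. The multiplicity is the combinatorial count of independent difference patterns available at generation $k$: there are $b_1\cdots b_{k-1}$ vertices at distance $t_{k-1}$ (more precisely, the relevant count traces back through the tree), and at each the difference space of the $b_k$ forward directions has dimension $b_k-1$, giving the factor $b_1\cdots b_{k-1}(b_k-1)$ that appears in \eqref{directsum}. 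I would verify that these subspaces, ranging over all $k\geq 1$, together with the symmetric subspace, span all of $L^2(\Gamma_c)$ by a dimension/completeness count at each finite generation and a density argument.

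The main obstacle will be bookkeeping the multiplicities and verifying completeness rigorously rather than the analytic content, which is elementary once the geometry is fixed. Concretely, the delicate point is justifying that the antisymmetric functions really do satisfy a Dirichlet condition at $t_k$ and Kirchhoff conditions further out, so that each summand is genuinely unitarily equivalent to $A_k^+$ as defined, and that the orthogonal decomposition is \emph{exhaustive}. Since Proposition~\ref{decomposition} is attributed to \cite{solomyak}, I would not reprove it from scratch but rather cite the construction there, sketching the symmetry-reduction argument above and checking that the matching conditions \eqref{boundaries} and the multiplicities match the stated form of \eqref{directsum}. The essential insight to record is that unbounded gaps $t_{n+1}-t_n$ in the tree translate, under this decomposition, into the same gaps in \emph{every} summand $A_k^+$, which is what will later let us invoke the one-dimensional sparse-potential machinery uniformly across the direct sum.
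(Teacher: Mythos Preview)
The paper does not prove this proposition at all: it is stated with the attribution \cite{solomyak} and immediately used. Your proposal correctly recognizes this and ends by saying you would cite \cite{solomyak} rather than reprove it, which is exactly what the paper does; the symmetry-reduction sketch you give is the standard argument and is accurate in outline, though the indexing in your middle paragraph is slightly garbled (a vertex at distance $t_k$ has $b_k$ forward neighbors, and the count $b_1\cdots b_{k-1}$ is the number of vertices at distance $t_k$, each contributing a $(b_k-1)$-dimensional difference space).
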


It follows from this proposition that our main results, Theorems \ref{continuous}, \ref{simonstolz} and~\ref{wonderland}, 
will be proved if we can show the corresponding results for any of the operators $A^+_k$, $k\geq 0$. 
Since we consider general sequences $\{(t_n,b_n)\}_{n=1}^\infty$ we may, without loss of generality, restrict our 
attention to $A^+_0$. To simplify notation we denote this operator from now on by $A^+$. 
Moreover, when studying these operators we need no longer assume that the $b_n$'s are integer-valued. All we need is
\begin{enumerate}
\item[(2)] $\inf_{n\geq 1} b_n >1$.
\end{enumerate}
Using boundary conditions \eqref{boundaries} and integrating by parts one easily finds that
\begin{equation}
 \label{eq:apos}
(f,A^+ f) = \int_0^\infty |f'|^2 \,dt
\end{equation}
for $f\in\dom A^+$. In particular, $A^+$ is a non-negative operator.

We now state our results concerning the operators $A^+$ which will imply Theorems~\ref{continuous}, \ref{simonstolz} and~\ref{wonderland}.

\begin{theorem} \label{delta-operator}
Let $\{(t_n,b_n)\}_{n=1}^\infty$ satisfy assumptions (1) and (2) and let $A^+$ be the associated operator. Then, if $\limsup_{n \rightarrow \infty} (t_{n+1}-t_n)=\infty$, the absolutely continuous spectrum of the operator $A^+$ is empty.
\end{theorem}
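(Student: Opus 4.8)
The plan is to argue by contradiction using Remling's theorem \cite{remling-cont} on the structure of right limits of operators with absolutely continuous spectrum. Suppose the absolutely continuous spectrum of $A^+$ is nonempty; then the essential support $\Sigma$ of its absolutely continuous part has positive Lebesgue measure. The first — and most laborious — task is to bring $A^+$ into a form to which Remling's theorem applies. Since $A^+$ is not a Schr\"odinger operator but a Sturm--Liouville operator carrying the singular point interactions \eqref{boundaries}, I would first recast it (via the rescalings implicit in \eqref{boundaries}) as a canonical system, or directly verify that the machinery of $m$-functions, right limits and reflectionlessness developed in \cite{remling-cont} carries over to this class. Granting this, Remling's theorem yields that \emph{every} right limit of $A^+$ is reflectionless on $\Sigma$.

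Next I would manufacture a right limit that cannot be reflectionless. Using $\limsup_{n}(t_{n+1}-t_n)=\infty$, choose a subsequence $n_k$ with $t_{n_k+1}-t_{n_k}\to\infty$ and translate the operator so that the vertex $t_{n_k}$ sits at the origin. Assumption (1) guarantees that the remaining vertices stay separated by at least $\inf_n(t_{n+1}-t_n)>0$, so after passing to a further subsequence the translates converge (in the topology for which the associated operators converge in strong resolvent sense) to a whole-line limit operator $H_\infty$. By construction $H_\infty$ is \emph{free} on $(0,\infty)$, because the next vertex $t_{n_k+1}$ escapes to $+\infty$, while it carries a genuine point interaction at the origin with parameter $b_\infty:=\lim_k b_{n_k}\in(1,\infty]$; here assumption (2) is exactly what forces $b_\infty>1$, i.e.\ a nontrivial defect rather than a degeneration to the free operator. (If $b_\infty=\infty$ the interaction becomes a Dirichlet decoupling at the origin, which is handled in the same way below.)

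The contradiction then comes from the rigidity of reflectionless operators. On one hand $H_\infty$ is reflectionless on $\Sigma$, and on the other hand the free whole-line operator $H_0$ is reflectionless on all of $\R$, in particular on $\Sigma$; moreover $H_\infty$ and $H_0$ coincide on the open half-line $(0,\infty)$. Splitting the line at a point $x_0>0$ lying in the free region, I would invoke the uniqueness result (Proposition \ref{unique}, whose proof rests on the Kre\u\i n resolvent formula of \cite{posilicano}): for an operator reflectionless on a set of positive measure, the restriction to one half-line determines the restriction to the other. Because reflectionlessness and the operator class are invariant under the reflection $x\mapsto -x$, this also lets the right half-line determine the left. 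Since $H_\infty$ and $H_0$ have identical (free) restrictions to $(x_0,\infty)$ and are both reflectionless on $\Sigma$, their restrictions to $(-\infty,x_0)$ must agree as well; hence $H_\infty=H_0$ is free everywhere. This contradicts the presence of the nontrivial interaction at the origin, completing the proof.

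I expect the genuine obstacles to be twofold, and both lie outside the soft extraction-of-limits argument above. The first is verifying that Remling's theorem and the attendant notions of $m$-function and reflectionlessness are available for the weighted Sturm--Liouville operators with interactions \eqref{boundaries}, rather than for ordinary Schr\"odinger operators. The second, and conceptually the crucial one, is the uniqueness statement of Proposition \ref{unique}: it is precisely here that mere reflectionlessness on a positive-measure set must be upgraded to rigidity, and it is the reason Theorem \ref{delta-operator} does not follow directly from \cite{remling-disc, remling-cont}. Note that one cannot circumvent this by engineering a \emph{single} isolated interaction as the right limit — the hypothesis only controls the edge to the \emph{right} of $t_{n_k}$, so the left neighbor need not recede and structure may persist on $(-\infty,0)$; this is exactly why the one-sided uniqueness result, rather than the elementary fact that a lone point interaction reflects, is needed.
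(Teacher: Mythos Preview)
Your proposal is correct and follows essentially the same route as the paper's proof: extract a right limit that is free on one half-line by exploiting the unbounded gaps, apply the Remling-type theorem (Theorem~\ref{BPR}) to conclude reflectionlessness on $\Sigma_{\rm ac}(A^+)$, then use Herglotz rigidity together with Proposition~\ref{unique} to force the right limit to be free everywhere, contradicting the surviving atom. You have also correctly identified the two genuine obstacles (extending Remling's framework to the jump-condition operators, and the Borg--Marchenko-type uniqueness of Proposition~\ref{unique}), which is exactly where the work of Sections~3 and~4.1 lies.

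The only difference is a cosmetic choice of orientation. The paper centers the shift inside the long edge so that the right limit $\hat\mu$ is free on $(-\infty,0]$ and carries an atom on $(0,\infty)$; then $\hat m_-(k^2;0)=ik$ is known, reflectionlessness pins down $\hat m_+$ on $\Sigma$, Herglotz uniqueness extends this to all of $\C^+$, and Proposition~\ref{unique} applies directly as stated to give $\hat\mu|_{(0,\infty)}=0$. You instead place the vertex $t_{n_k}$ at the origin, obtaining freeness on $(0,\infty)$ and structure on $(-\infty,0]$, which forces you to invoke the reflected version of Proposition~\ref{unique} and to evaluate at an auxiliary point $x_0>0$ to avoid the atom at the origin. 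Both work; the paper's orientation is marginally cleaner only because Proposition~\ref{unique} is formulated for measures supported in $(0,\infty)$, so no appeal to reflection symmetry is needed.
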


One easily sees that if $\limsup_{n \rightarrow \infty} (t_{n+1}-t_n)=\infty$, then the spectrum of $A^+$ coincides with the interval $[0,\infty)$. According to Theorem \ref{delta-operator} this spectrum might have a singular continuous and a pure point component. However, if a subsequence of the differences $t_{n+1}-t_n$ grows sufficiently fast, we can rule out the existence of eigenvalues following an argument of Simon and Stolz \cite{simon-stolz} and we obtain 

\begin{theorem}\label{no-ev}
Let $\{(t_n,b_n)\}_{n=1}^\infty$ with $\sup_n b_n<\infty$ and let $A^+$ be the associated operator. Then, if
$ \limsup_{n \rightarrow \infty} n^{-2n} (t_{n+1}-t_n)>0$, the spectrum of $A^+$ coincides with $[0,\infty)$ and is purely singular continuous.
\end{theorem}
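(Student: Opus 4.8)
The plan is to prove the three assertions—that $\spec(A^+)=[0,\infty)$, that there is no absolutely continuous part, and that there are no eigenvalues—separately, with only the last requiring genuinely new work. Non-negativity of $A^+$ (equation \eqref{eq:apos}) gives $\spec(A^+)\subseteq[0,\infty)$, and since the hypothesis $\limsup_n n^{-2n}(t_{n+1}-t_n)>0$ forces $\limsup_n(t_{n+1}-t_n)=\infty$, the reverse inclusion follows by placing normalized bumps $\psi_j(x)=\ell_j^{-1/2}\varphi\!\left((x-t_{n_j})/\ell_j\right)e^{ikx}$ inside free intervals $(t_{n_j},t_{n_j+1})$ of length $\ell_j=t_{n_j+1}-t_{n_j}\to\infty$, supported away from the vertices so that \eqref{boundaries} holds trivially; these form a Weyl sequence for every $E=k^2\geq 0$. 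The same hypothesis lets me invoke Theorem~\ref{delta-operator} to conclude that the absolutely continuous spectrum of $A^+$ is empty. Hence everything reduces to ruling out eigenvalues, for then the spectral measure has neither an absolutely continuous nor a pure point part and is therefore purely singular continuous, supported on $[0,\infty)$.

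For the absence of eigenvalues I would follow Simon and Stolz \cite{simon-stolz}. Fix $E=k^2$ with $k>0$ and a would-be eigenfunction $u$. On each free interval $(t_n,t_{n+1})$ the equation is $-u''=Eu$, so $u(x)=\alpha_n\cos(k(x-t_n))+\beta_n\sin(k(x-t_n))$, and the Prüfer radius $R_n^2:=\alpha_n^2+\beta_n^2=u^2+(u'/k)^2$ is \emph{constant} across the interval. Changes occur only at the vertices, where \eqref{boundaries} acts on $(u,u')$ by the unimodular matrix $M_n=\mathrm{diag}(\sqrt{b_n},1/\sqrt{b_n})$. Since the $b_n$ are bounded, with $1<b_n\leq C$, one has $\|M_n^{\pm 1}\|=\sqrt{b_n}\leq\sqrt{C}$, so $C^{-1/2}R_{n-1}\leq R_n\leq C^{1/2}R_{n-1}$ and in particular the lower bound $R_n\geq R_0\,C^{-n/2}$. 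In words: the bounded branching forces the amplitude to decay \emph{at most geometrically}, at the fixed rate $\sqrt{C}$.

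The heart of the matter is then a competition of two rates. A direct computation gives $\int_{t_n}^{t_{n+1}}|u|^2\,dx\geq\tfrac14 R_n^2\,\ell_n$ once $\ell_n$ exceeds a fixed multiple of $1/k$. Using $R_n\geq R_0 C^{-n/2}$ and summing over a subsequence $n_j$ along which $\ell_{n_j}\geq\delta\,n_j^{2n_j}$—which exists with some $\delta>0$ precisely by the $\limsup$ hypothesis—I obtain
\[
\|u\|_{L^2}^2\ \geq\ \tfrac14\sum_j R_{n_j}^2\,\ell_{n_j}\ \geq\ \frac{\delta R_0^2}{4}\sum_j\left(\frac{n_j^2}{C}\right)^{n_j},
\]
whose summands blow up because $n_j^2>C$ eventually. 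Since $R_0^2=(u'(0)/k)^2$ vanishes only for $u\equiv 0$, this forces $u\equiv 0$, so no $E>0$ is an eigenvalue. The energy $E=0$ is excluded directly from \eqref{eq:apos}: $A^+u=0$ gives $\int|u'|^2=0$, whence $u$ is piecewise constant, and $u(0)=0$ together with \eqref{boundaries} propagates this to $u\equiv 0$. The point of the exponent $2n$ is exactly that $n^{2n}$ outgrows every geometric rate $C^{n}$, uniformly over all admissible bounded branching sequences.

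The step I expect to require the most care is the legitimacy of the Simon--Stolz machinery in the present singular setting. Their argument is formulated for Schrödinger operators with a potential, whereas here the perturbations are the weighted vertex conditions \eqref{boundaries}; I must therefore verify that the transfer-matrix/Prüfer formalism underlying their estimates applies, which it does because the solutions are free between vertices and the vertex maps $M_n$ are explicit and unimodular. The remaining delicate bookkeeping is that the interval estimate $\int_{t_n}^{t_{n+1}}|u|^2\geq\tfrac14 R_n^2\ell_n$ and the constants in the amplitude comparison depend on $k$ (they degenerate as $k\to 0$); since eigenvalues are excluded one energy at a time, this $k$-dependence is harmless, but it must be tracked to ensure the $L^2$ lower bound is valid on each fixed positive interval $(t_n,t_{n+1})$ of sufficiently large length.
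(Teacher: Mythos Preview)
Your proof is correct and follows essentially the same strategy as the paper: invoke Theorem~\ref{delta-operator} for the absence of absolutely continuous spectrum and a Simon--Stolz argument for the absence of eigenvalues. The only cosmetic difference is that the paper cites the Simon--Stolz criterion $\int_0^\infty \|T(x,0,E)\|^{-2}\,dx=\infty$ directly, bounding $\|T(x,0,E)\|\leq n^n$ for $x\in(t_n,t_{n+1})$, whereas you unpack that criterion by hand via the Pr\"ufer radius lower bound $R_n\geq R_0 C^{-n/2}$ and the interval estimate $\int_{t_n}^{t_{n+1}}|u|^2\geq\tfrac14 R_n^2\ell_n$; your explicit Weyl-sequence verification of $\spec(A^+)=[0,\infty)$ is also stated but not written out in the paper.
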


Our next result states that singular continuous spectrum is indeed the `generic' situation. 
In order to define what we mean by `generic' we will introduce a natural topology on the sequences 
$\{(t_n,b_n)\}_{n=1}^\infty$ as above. 
It will be convenient to identify such a sequence with a measure $\mu = \sum \beta_n \delta_{t_n}$ on $\R_+$ where $\beta_n =(\sqrt{b_n}+1)/(\sqrt{b_n}-1)$. 
For later use we will consider at once the case of measures on the whole line.

For any $\varepsilon>0$ we denote by $\mathcal{M}_{\rm a}^{\varepsilon}$ the set of all non-negative atomic measures $\mu$ on $\R$ of the form $\mu=\sum_{n\in J} {\beta}_n \delta_{{t}_n}$ where $ \beta_n\in [1,\infty)$ and where $ t_n$ are real numbers satisfying $|{t}_{n}-{t}_{m}| \geq \varepsilon$ for all $n\neq m$. The index set $J$ may be finite, infinite or empty.  Moreover, we denote by $\mathcal{M}_{\rm a}^{\varepsilon,+}$ the subsets consisting of all $\mu\in \mathcal{M}_{\rm a}^{\varepsilon}$ with $\supp\mu\subset [\epsilon,\infty)$, and we put
$$
\mathcal{M}_{\rm a}^{0} := \bigcup_{\varepsilon>0} \mathcal{M}_{\rm a}^{\varepsilon} \,,
\qquad
\mathcal{M}_{\rm a}^{0,+} := \bigcup_{\varepsilon>0} \mathcal{M}_{\rm a}^{\varepsilon,+} \,.
$$
Finally, for $C\geq 2$ let $\mathcal{M}_{\rm a}^{\varepsilon,C,+}$ be the subset consisting of all $\mu\in \mathcal{M}_{\rm a}^{\varepsilon,+}$ with $1+C^{-1} \leq \beta_n \leq C$ for all $n\in J$.

With any measure $\mu= \sum_{n\in J} {\beta}_n \delta_{{t}_n} \in \mathcal{M}_{\rm a}^{0,+}$ we associate an operator $A^+_\mu$ in $L^2(\R_+)$ acting as $A^+_\mu f = -f''$ in $\R_+\setminus\supp\mu$ on functions satisfying $f(0)=0$ and \eqref{boundaries} for all $n\in J$ where $\beta_n = (\sqrt{b_n} +1)/(\sqrt{b_n}-1)$ with $b_n\in (1,\infty]$. (For $b_n=\infty$, \eqref{boundaries} is interpreted as $f(t_n-)=0$ and $f'(t_n+)=0$.) If $J$ is infinite and all $b_n$'s are finite, this is precisely the operator $A^+$ defined above.

The one-dimensional analog of Theorem \ref{wonderland} is

\begin{theorem}\label{wonderland1d}
In the space $\mathcal{M}_{\rm a}^{\varepsilon, C,+}$ with the topology of weak convergence, 
the set of $\mu$'s for which the spectrum of $A_\mu^+$ is purely singular continuous is a dense $G_\delta$ set.
\end{theorem}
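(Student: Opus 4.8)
The plan is to deduce Theorem~\ref{wonderland1d} from Simon's Wonderland Theorem \cite{simon-wonderland}, applied to the family $\{A_\mu^+\}_{\mu \in \mathcal{M}_{\rm a}^{\varepsilon,C,+}}$. Since $\mathcal{M}_{\rm a}^{\varepsilon,C,+}$ is a compact (hence complete) metric space in the topology of weak convergence, the Baire category theorem is available. The Wonderland Theorem reduces the claim to three verifications: (i) the map $\mu\mapsto A_\mu^+$ is continuous in the strong resolvent sense; (ii) the set of $\mu$ for which $A_\mu^+$ has empty absolutely continuous spectrum in a given subinterval of $(0,\infty)$ is dense; and (iii) the set of $\mu$ for which $A_\mu^+$ has no eigenvalues in that subinterval is dense, together with density of those $\mu$ whose spectrum fills the interval. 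The internal machinery of \cite{simon-wonderland} then produces, for each such interval, a dense $G_\delta$ set of $\mu$ on which the spectrum is purely singular continuous; the $G_\delta$ property rests on the continuity of the Weyl $m$-functions $m_\mu(z)$ in $\mu$ for fixed $z\in\C_+$, combined with the Poisson-integral characterizations of the absolutely continuous weight and of point masses.

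First I would establish the continuity hypothesis (i). Because the $A_\mu^+$ are point-interaction Sturm--Liouville operators whose coefficients depend singularly on the atoms $t_n$ and weights $\beta_n$ of $\mu$, and because limit configurations may feature merging atoms, atoms escaping to infinity, Dirichlet decoupling ($b_n=\infty$), or only finitely many atoms, strong resolvent continuity is not evident from the quadratic form alone. Here I would invoke the Kre\u\i n-type resolvent formula of Section~3 (following \cite{posilicano}), which expresses $(A_\mu^+-z)^{-1}$ through boundary data depending continuously on $\mu$ in the weak topology. The same formula yields $m_{\mu_k}(z)\to m_\mu(z)$ for $z\in\C_+$ whenever $\mu_k\to\mu$ weakly, which is exactly the input the spectral-type sets need to be $G_\delta$.

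Next I would verify the density statements (ii) and (iii) by a truncation-and-tail construction. Given a target $\mu=\sum_n\beta_n\delta_{t_n}\in\mathcal{M}_{\rm a}^{\varepsilon,C,+}$, define $\mu^{(N)}$ by keeping the atoms of $\mu$ in $[0,N]$ and appending, beyond $N$, a tail of atoms with admissible weights $\beta\in[1+C^{-1},C]$ and gaps tending to infinity. Letting the tail gaps grow slowly (say linearly) makes $\mu^{(N)}$ sparse in the sense of Theorem~\ref{delta-operator}, so $A_{\mu^{(N)}}^+$ has empty absolutely continuous spectrum; letting the gap at global index $n$ grow like $n^{2n}$ makes $\mu^{(N)}$ satisfy the hypothesis of Theorem~\ref{no-ev}, so $A_{\mu^{(N)}}^+$ is purely singular continuous with spectrum $[0,\infty)$. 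In either case $\mu^{(N)}\in\mathcal{M}_{\rm a}^{\varepsilon,C,+}$, since the $\varepsilon$-separation and the weight bounds are preserved, and $\mu^{(N)}\to\mu$ weakly because a compactly supported test function sees only the unmodified initial segment once $N$ is large. Thus both density conditions hold, and the super-sparse approximants in fact lie in the target set.

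Finally I would assemble the proof. For each interval $(a,b)$ with rational endpoints in $(0,\infty)$, the Wonderland Theorem gives that $\{\mu : \spec(A_\mu^+)\cap(a,b) \text{ is purely singular continuous and } (a,b)\subset\spec(A_\mu^+)\}$ is a dense $G_\delta$ subset of $\mathcal{M}_{\rm a}^{\varepsilon,C,+}$; intersecting over the countably many such intervals, which again yields a dense $G_\delta$ set by Baire, gives the set of $\mu$ for which $A_\mu^+$ is purely singular continuous on all of $(0,\infty)$, proving Theorem~\ref{wonderland1d}. I expect the main obstacle to be hypothesis (i): proving strong resolvent continuity uniformly over the whole compact space, including the degenerate limiting configurations, which is precisely where the resolvent formula of Section~3 is indispensable and where the singular nature of the weights makes a direct, form-based argument fail.
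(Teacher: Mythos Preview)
Your proposal is correct and follows essentially the same route as the paper: compactness of $\mathcal{M}_{\rm a}^{\varepsilon,C,+}$, strong resolvent continuity via Proposition~\ref{strongresconv} (which rests on the resolvent formula of Section~3), and density via tail modification together with Theorem~\ref{no-ev}. The paper streamlines slightly by invoking \cite[Thms.~1.1 and~1.2]{simon-wonderland} directly on all of $[0,\infty)$ and using the single super-sparse family from Theorem~\ref{no-ev} for density (since it already yields purely singular continuous spectrum), so your separate linear-gap construction and the countable intersection over rational subintervals are unnecessary, and your worries about merging atoms or $b_n=\infty$ do not arise in $\mathcal{M}_{\rm a}^{\varepsilon,C,+}$.
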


\begin{remark*}
 The proof will show that the same is true if we restrict the $b_n$'s to be integers. This is what we need when we deduce Theorem \ref{wonderland}. Moreover, to deduce Theorem \ref{wonderland} we also use that a countable intersection of dense $G_{\delta}$'s is a dense $G_\delta$ by Baire's Category Theorem. 
\end{remark*}


\section{The resolvent and the m-function}

\subsection{A resolvent formula}

In this subsection we derive a convenient expression for the resolvent of the operator $A^+=A^+_\mu$ for $\mu\in
\mathcal{M}_{\rm a}^{0,+}$. We write
\begin{equation}\label{eq:mu}
\mu = \sum_{n\in J} \frac{\sqrt{b_n}+1}{\sqrt{b_n}-1} \ \delta_{t_n}\,,
\qquad
0 < t_1 < t_2 < \ldots \,,
\qquad
1< b_n\leq\infty \,,
\end{equation}
where $J$ is either of the form $\{1,2,\ldots,\#\supp\mu\}$ if there is a finite number of atoms, or $J=\N$ if there are infinitely many atoms. Let $A^+_0 :=-d^2/dt^2$ be the Dirichlet Laplacian in $L^2(\R_+)$ and recall that its resolvent $(A^+_0-z)^{-1}$, $z\in\C\setminus[0,\infty)$, has integral kernel
$$
g_z(t,u) := \frac{i}{2k} \left( e^{ik|t-u|} -e^{ik(t+u)} \right)\,,
\qquad z=k^2,\, \im k>0\,.
$$
Put $\mathfrak H := \ell(J,\C^2)$ and  let $\gamma$ be the trace operator from $L^2(\R_+)$ to $\mathfrak H$ with domain $\dom\gamma=\dom A^+_0$, that is,
$$
(\gamma f)_n := \begin{pmatrix} f(t_n) \\ f'(t_n) \end{pmatrix} \,.
$$
For any $z\in\C\setminus[0,\infty)$ we define an operator $T(z)$ in $\mathfrak H$ by
\begin{align*}
T(z)_{nm} := 
\begin{pmatrix}
    \frac{1}{2ik} \left( e^{ik|t_n-t_m|} -e^{ik(t_n+t_m)} \right) &
    \frac{1}{2} \left( \sigma_{mn} e^{ik|t_n-t_m|} -e^{ik(t_n+t_m)} \right) \\
    \frac{1}{2} \left( \sigma_{nm} e^{ik|t_n-t_m|} -e^{ik(t_n+t_m)} \right) &
    -\frac{ik}{2} \left( e^{ik|t_n-t_m|} + e^{ik(t_n+t_m)} \right)
  \end{pmatrix}\,
\end{align*}
where $\sigma_{mn}:=\sgn(t_m-t_n)$, 
with the convention $\sgn(0)=0$. Finally, we define the multiplication operator $B$ in $\mathfrak H$ by
$$
B_{nm} :=  \delta_{nm} \frac12 \frac{\sqrt {b_n} +1}{\sqrt {b_n}-1}
\begin{pmatrix}
0 & 1 \\ 1 & 0
\end{pmatrix} \,.
$$
The following expression for the resolvent of the operator $A^+=A^+_\mu$ will be very useful for us.

\begin{lemma}\label{res}
For any $z\in\C\setminus[0,\infty)$ one has
\begin{equation}\label{eq:res}
(A^+-z)^{-1} = (A^+_0-z)^{-1}  + \left( \gamma (A^+_0-\overline z)^{-1} \right)^* \left( T(z) + B \right)^{-1}  \gamma (A^+_0-z)^{-1} \,.
\end{equation}
\end{lemma}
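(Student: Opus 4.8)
The plan is to verify directly that the right-hand side of \eqref{eq:res}, call it $R(z)$, is the resolvent of $A^+$. Writing $G_z := (\gamma(A_0^+-\bar z)^{-1})^*$, the identity to be proved is $R(z) = (A_0^+-z)^{-1} + G_z (T(z)+B)^{-1}\gamma(A_0^+-z)^{-1}$. Since $A^+$ is self-adjoint, $(A^+-z)\colon \dom A^+ \to L^2(\R_+)$ is a bijection for $z\in\C\setminus[0,\infty)$, so it suffices to show that for every $\phi\in L^2(\R_+)$ the function $u:=R(z)\phi$ lies in $\dom A^+$ and satisfies $(A^+-z)u=\phi$. I would split $u=v+w$ with $v:=(A_0^+-z)^{-1}\phi$ and $w:=G_z\eta$, where $\eta:=(T(z)+B)^{-1}\gamma v$, and analyze the two pieces through their boundary data at the points $t_n$.

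The first piece $v$ solves $-v''-zv=\phi$, satisfies $v(0)=0$, and is continuous together with $v'$ (as an element of $H^2(\R_+)$); in particular it has no jump at any $t_n$ and its trace is $\gamma v=:\xi$. For the second piece I would first record the explicit form $w(t)=\sum_n\big(\eta_n^{(1)} g_z(t,t_n)+\eta_n^{(2)}\,\partial_u g_z(t,u)|_{u=t_n}\big)$, which follows from the definition of $G_z$ as an adjoint together with $g_{\bar z}=\overline{g_z}$. Each kernel solves the homogeneous equation $-w''-zw=0$ away from its pole and vanishes at $0$, so $w$ does too. A computation of the one-sided limits then shows that $w$ has the prescribed jumps $[w^{(0)}](t_n)=\eta_n^{(2)}$ and $[w^{(1)}](t_n)=-\eta_n^{(1)}$ (monopoles jump the derivative, dipoles jump the value), while the symmetric averages of $w$ and $w'$ at $t_n$ equal $-(T(z)\eta)_n$. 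The point is that $T(z)$ is precisely the matrix of $g_z$, $\partial_t g_z$, $\partial_u g_z$ and $\partial_t\partial_u g_z$ evaluated at the pairs $(t_n,t_m)$, the convention $\sgn 0=0$ on the diagonal producing exactly the average across the kink of $e^{ik|t-u|}$.

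Combining the two pieces, $u=v+w$ solves $-u''-zu=\phi$ off the points and vanishes at $0$, and at each $t_n$ its boundary data are $[u](t_n)=(\eta_n^{(2)},-\eta_n^{(1)})^{\top}$ and, using the defining relation $(T(z)+B)\eta=\xi$, $\{u\}(t_n)=\xi_n-(T(z)\eta)_n=(B\eta)_n=\tfrac12\beta_n(\eta_n^{(2)},\eta_n^{(1)})^{\top}$. It then remains to observe that the matching conditions \eqref{boundaries}, rewritten in terms of jumps and averages, read $[u^{(0)}]=\tfrac{2}{\beta_n}\{u^{(0)}\}$ and $[u^{(1)}]=-\tfrac{2}{\beta_n}\{u^{(1)}\}$ (an elementary manipulation using $(\beta_n+1)/(\beta_n-1)=\sqrt{b_n}$, the jump--average form also covering the limiting case $b_n=\infty$, i.e.\ $\beta_n=1$, uniformly), and these are satisfied identically by the data just computed. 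Hence $u\in\dom A^+$ with $(A^+-z)u=\phi$, which proves the formula.

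The routine but essential analytic input is that all objects are well defined, and here assumption (1), $\inf_n(t_{n+1}-t_n)>0$, does the work: it yields a uniform Sobolev trace bound making $\gamma$ bounded from $\dom A_0^+$ (graph norm) into $\mathfrak H$, hence $G_z$ bounded, and via $|t_n-t_m|\geq\epsilon|n-m|$ it gives exponential off-diagonal decay of the kernel $e^{ik|t_n-t_m|}$ (recall $\im k>0$), so that $T(z)$ is bounded on $\mathfrak H$ and the series defining $w$ converges in $L^2(\R_+)$. The one genuine obstacle is the invertibility of $T(z)+B$, which is needed for $R(z)$ to make sense; I would establish it first for $\im z$ large, where $T(z)$ is small and a Neumann-series/Schur argument applies, and then propagate it to all $z\in\C\setminus[0,\infty)$ by analytic Fredholm theory together with the already-known self-adjointness of $A^+$ (equivalently, this invertibility is exactly what the abstract Kre\u\i n--Posilicano framework of \cite{posilicano}, which the paper adopts, guarantees once $\gamma$ is shown to be admissible).
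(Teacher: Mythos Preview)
Your argument is correct and in substance very close to the paper's, but the packaging differs. The paper invokes Posilicano's abstract Kre\u\i n-type theorem: after verifying the structural identity $\gamma_\pm\bigl(\gamma(A_0^+-\overline z)^{-1}\bigr)^* = -T(z)\pm\tfrac12 J$ (which encodes exactly your jump/average computation) and its consequence $T(z)-T(\zeta)=(\zeta-z)\gamma(A_0^+-\zeta)^{-1}\bigl(\gamma(A_0^+-\overline z)^{-1}\bigr)^*$, it concludes that the right-hand side of \eqref{eq:res} is the resolvent of \emph{some} self-adjoint operator $G$, and then checks $G=A^+$ by reading off the boundary conditions from $a_\pm=(B\pm\tfrac12 J)(T(z)+B)^{-1}c$. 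You instead bypass the abstract machinery and verify directly that $R(z)\phi\in\dom A^+$ with $(A^+-z)R(z)\phi=\phi$; since $A^+$ is already known to be self-adjoint this suffices. The core calculation---identifying the one-sided traces of $G_z\eta$ with $-T(z)\eta\pm\tfrac12 J\eta$ and then matching \eqref{boundaries}---is identical in both proofs; your route is more self-contained, the paper's buys the invertibility of $T(z)+B$ for free from the abstract framework.

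One small slip: it is not true that $T(z)$ itself is small for $\im z$ large (the $(2,2)$ diagonal entry behaves like $-ik/2$). What is true, and what the paper exploits in the proof of Lemma~\ref{asymp}, is that after splitting off the diagonal piece $T^0(-\kappa^2)_{nn}=\mathrm{diag}(-1/2\kappa,\kappa/2)$ the remainder $T^R$ is small by Schur, while each $2\times2$ block $T^0_{nn}+B_{nn}$ has determinant $-(1+\beta_n^2)/4$ bounded away from zero. So a Neumann argument does work, just not around $B$ alone; alternatively, as you note, Posilicano delivers the invertibility directly.
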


\begin{proof}
Obviously, $T(\overline z)=T(z)^*$. It is straightforward to check that
$$
\gamma_\pm \left( \gamma (A^+_0-\overline z)^{-1} \right)^* = -T(z) \pm \frac 12 J\,,
\qquad J_{nm}:=\delta_{nm} 
\begin{pmatrix}
0 & 1 \\ -1 & 0
\end{pmatrix} \,.
$$
Therefore, the resolvent formula implies that
$$
T(z) - T(\zeta) = (\zeta -z) \gamma_{\pm}\left(A^+_0 -\zeta\right)^{-1} \left( \gamma \left(A^+_0 -\overline z\right)^{-1} \right)^* \,
$$
and so
$$
T(z) - T(\zeta) = (\zeta -z) \gamma\left(A^+_0 -\zeta\right)^{-1} \left( \gamma \left(A^+_0 -\overline z\right)^{-1} \right)^* \,.
$$
Hence, by the abstract result of Posilicano \cite{posilicano} there exists a self-adjoint operator $G$, say, with $(G-z)^{-1}$ given by the RHS of \eqref{eq:res}. We need to prove that $G=A^+$. For any function $f\in L^2(\R_+) \cap H^2(\R_+\setminus\{t_n\}_{n\in J})$ we introduce
$$
(\gamma_\pm f)_n := \begin{pmatrix} f(t_n\pm) \\ f'(t_n\pm) \end{pmatrix} \,.
$$
Now assume that $f\in\dom \,G$. From \eqref{eq:res} one sees that $f\in H^2(\R_+\setminus\{t_n\}_{n\in J})$ and  $Gf=-f''$. Let $a_\pm :=\gamma_\pm f$ and $c:=\gamma (A^+_0-z)^{-1} (G-z)f$. (Note that $(A^+_0-z)^{-1} (G-z)f$ and its derivative are continuous.) Applying \eqref{eq:res} to $(G-z)f$ we learn that
\begin{equation}\label{eq:bc}
a_\pm = c +  (-T(z) \pm \tfrac 12 J ) (T(z)+B)^{-1} c = (B \pm \tfrac 12 J ) (T(z)+B)^{-1} c   \,.
\end{equation}
Decomposing $J=\hat J \dot\cup \check J$ where $\hat J := \{n\in J : \, b_n<\infty \}$ and accordingly $\mathfrak H=\hat {\mathfrak H} \oplus \check{\mathfrak H}$ and $a_\pm = \hat a_\pm + \check a_\pm$, we see from \eqref{eq:bc} after eliminating $c$ that
$
\hat a_+ = (B + \frac 12 J )  (B - \frac 12 J )^{-1} \hat a_- \,.
$
(Note that the inverse $(B - \frac 12 J )^{-1}$ is well-defined on $\hat{\mathfrak H}$.) Calculating the product of the two matrices we find jump condition \eqref{boundaries} for $f$. For $n\in\check J$  \eqref{eq:bc} says that the second component of $a_{+,n}$ and the first component of $a_{-,n}$ are zero, which again are the claimed boundary conditions for $f$.
\end{proof}

Our first application of the resolvent formula is to prove that weak convergence of measures implies strong resolvent convergence of the associated operators.

\begin{proposition}\label{strongresconv}
Let $\{\mu^{(j)}\}_{j=1}^\infty \subset \mathcal{M}_{\rm a}^{\varepsilon,+}$ for some $\epsilon>0$ and assume that $\mu^{(j)} \to \mu$ weakly. Then $A^+_{\mu^{(j)}} \to A^+_{\mu}$ in strong resolvent sense.
\end{proposition}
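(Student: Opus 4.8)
The plan is to read everything off the resolvent formula \eqref{eq:res} of Lemma \ref{res}, evaluated at a single point $z$ deep inside the common resolvent set, and to establish convergence of the $\mu$-dependent factors as $\mu^{(j)}\to\mu$. For a sequence of uniformly bounded-below self-adjoint operators, strong resolvent convergence follows from strong convergence of the resolvents at one point of the common resolvent set, and since the unperturbed term $(A^+_0-z)^{-1}$ in \eqref{eq:res} does not depend on $\mu$, everything reduces to the perturbation term
\[
R_\mu(z):=\left(\gamma(A^+_0-\overline z)^{-1}\right)^*\left(T_\mu(z)+B_\mu\right)^{-1}\gamma(A^+_0-z)^{-1},
\]
where I make the $\mu$-dependence of $\gamma$, of $T(z)$ and of $B$ explicit. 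Because $A^+_\mu\geq 0$ by \eqref{eq:apos}, each $\|(A^+_\mu-z)^{-1}\|$ is bounded by the reciprocal distance of $z$ to $[0,\infty)$, so $R_\mu(z)=(A^+_\mu-z)^{-1}-(A^+_0-z)^{-1}$ gives $\sup_\mu\|R_\mu(z)\|<\infty$. By an $\varepsilon/3$-argument it therefore suffices to prove $R_{\mu^{(j)}}(z)f\to R_\mu(z)f$ for $f$ in a dense set, e.g. $f\in C_c^\infty(0,\infty)$, for which $w:=(A^+_0-z)^{-1}f$ and $w'$ decay.

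The first and main technical point is a bound on $(T_\mu(z)+B_\mu)^{-1}$ that is uniform over $\mathcal{M}_{\rm a}^{\varepsilon,+}$. I would get this by choosing $z=-\kappa^2$ with $\kappa>0$ large (depending on $\varepsilon$), so that $k=i\kappa$ and the Green's-function factors become $e^{ik|t_n-t_m|}=e^{-\kappa|t_n-t_m|}$ and $e^{ik(t_n+t_m)}=e^{-\kappa(t_n+t_m)}$. The separation $|t_n-t_m|\geq\varepsilon|n-m|$ then makes $T_\mu(z)+B_\mu$ block-diagonally dominant: a direct computation shows the diagonal $2\times 2$ blocks $(T_\mu(z)+B_\mu)_{nn}$ are invertible with inverse of norm $O(\kappa)$, uniformly in $t_n>0$ and $\beta_n\in[1,\infty)$ (their determinant stays bounded away from $0$ even as $\beta_n\to 1$ or $\beta_n\to\infty$), while the off-diagonal block sums $\sum_{m\neq n}\|(T_\mu(z))_{nm}\|$ are $O(\kappa\,e^{-\kappa\varepsilon})$. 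Since $\kappa\cdot\kappa\,e^{-\kappa\varepsilon}\to 0$, a Schur-test/Neumann-series argument yields $\|(T_\mu(z)+B_\mu)^{-1}\|\leq C(\varepsilon,\kappa)$ uniformly, and simultaneously shows that the entries of the inverse depend continuously on the parameters $(t_n,\beta_n)$ and are summable, which is what makes the limit stable.

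I would then organize the limit through the atoms. Weak convergence together with the separation and the bound $\beta_n\geq 1$ forces a clean matching: in any window $[0,N]$ (with $N$ chosen so that $\mu$ has no atom at $N$) there are at most $N/\varepsilon+1$ atoms, and for large $j$ the atoms of $\mu^{(j)}$ in $[0,N]$ converge in position to those of $\mu$ with converging weights $\beta^{(j)}_n\to\beta_n$; no atom can disappear (weights are $\geq 1$, so no matched mass tends to $0$), and none can collide (separation). Using the uniform bound from the previous paragraph and the decay of $w,w'$, I truncate the sequences $\gamma_\mu w$ and the operator $(T_\mu(z)+B_\mu)^{-1}$ to this window with an error small uniformly in $j$; on the window everything is a fixed finite-dimensional object whose entries converge by the continuity noted above, so the truncated quantities converge, and letting $N\to\infty$ gives $R_{\mu^{(j)}}(z)f\to R_\mu(z)f$.

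I expect the main obstacle to be exactly this uniform invertibility of $T_\mu(z)+B_\mu$ across the whole class, including the degenerate limits $\beta_n\to 1$ (Dirichlet decoupling, the $\check J$ case in Lemma \ref{res}) and $\beta_n\to\infty$, together with the bookkeeping forced by the $\mu$-dependent Hilbert space $\mathfrak H=\ell(J,\C^2)$ when the number of atoms changes. Choosing $z$ deep in the resolvent set is what tames the analytic issue—the $O(\kappa)$ growth of the diagonal-block inverses is beaten by the $e^{-\kappa\varepsilon}$ decay of the couplings—while the separation and the constraint $\beta_n\geq 1$ make the atom-matching unambiguous and control the count. If one prefers not to identify the varying spaces directly, the same estimates yield weak operator convergence of the resolvents at $z$ and $\overline z$, which one upgrades to strong convergence through the identity $\|(A^+_\mu-z)^{-1}f\|^2=(2i\,\im z)^{-1}\langle[(A^+_\mu-z)^{-1}-(A^+_\mu-\overline z)^{-1}]f,f\rangle$.
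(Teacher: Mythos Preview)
Your proposal is correct and follows essentially the same route as the paper: both arguments plug $z=-\kappa^2$ with large $\kappa$ into the resolvent formula \eqref{eq:res}, obtain the uniform bound on $(T_\mu(z)+B_\mu)^{-1}$ over $\mathcal{M}_{\rm a}^{\varepsilon,+}$ by the same diagonal-dominance/Schur-test mechanism (the paper defers this computation to the proof of Lemma \ref{asymp}), and then pass to the limit using that weak convergence of the measures forces convergence of atom positions and weights. The only organizational difference is that the paper goes straight for the quadratic form $(f,(A^+_{\mu^{(j)}}+\kappa^2)^{-1}f)$ and invokes the equivalence of weak and strong resolvent convergence, whereas you first aim for strong convergence via a window truncation and only mention the quadratic-form shortcut as an alternative at the end; the paper also sidesteps the varying-$\mathfrak H$ bookkeeping by simply assuming $J^{(j)}=J=\N$ throughout.
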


\begin{proof}
Let $f\in L^2(0,\infty)$ with compact support. The assertion will follow if we can prove that for some sufficiently large $\kappa>0$ one has
$$
\left(f, (A^+_{\mu^{(j)}}+\kappa^2)^{-1}f\right) \to \left(f, (A^+_{\mu}+\kappa^2)^{-1}f\right) \,.
$$
(Here we use that weak resolvent convergence is the same as strong resolvent convergence and that the operators $A^+_{\mu^{(j)}}$ and $A^+_{\mu}$ are all non-negative, so that it suffices to verify the convergence at a single point $-\kappa^2$ of the resolvent set.) 
We introduce the operators $T^{(j)}(-\kappa^2)$, $B^{(j)}$ and $\gamma^{(j)}$ in the obvious way and write $a^{(j)} := \gamma^{(j)} (A^+_0 +\kappa^2)^{-1} f$ and $a := \gamma (A^+_0 +\kappa^2)^{-1} f$. In view of the resolvent formula \eqref{eq:res} we need to show that
\begin{equation}\label{eq:convt}
\left( a^{(j)}, \left( T^{(j)}(-\kappa^2) + B^{(j)}\right)^{-1}  a^{(j)} \right)
\to \left( a, \left( T(-\kappa^2) + B \right)^{-1}  a \right) \,.
\end{equation}
We shall assume that $J^{(j)}=J=\N$ for any $j$, the other cases being similar. 
We claim that
\begin{equation}\label{eq:conva}
a^{(j)} \to a
\qquad \text{in} \ \ell^2(\N,\C^2) \,.
\end{equation}
Indeed, the weak convergence implies that $t_n^{(j)} \to t_n$ and hence, since $(A^+_0 +\kappa^2)^{-1} f$ is $C^1$, that $a^{(j)}_n \to a_n$ for each $n$. Moreover, for all $t_n^{(j)}\geq \sup\supp f:=M$ one has $a^{(j)}_n = c e^{-\kappa t_n^{(j)}} (1,- \kappa )$ with a constant $c$ depending on $f$ but not on $j$ or $n$. Since $t_n^{(j)} \geq \epsilon n$, it follows that $|a^{(j)}_n|^2 \leq |c|^2 (1+\kappa^2) e^{- 2\kappa \epsilon n}$ for 
$n\geq M/\varepsilon$ and similarly for $a$. From this one easily deduces \eqref{eq:conva}.

In the proof of Proposition \ref{unique} below we show that for sufficiently large $\kappa$, the operators $\left( T^{(j)}(-\kappa^2) + B^{(j)}\right)^{-1}$ are uniformly bounded in $j$. Moreover, the weak convergence of the measures implies that $T^{(j)}(-\kappa^2) + B^{(j)} \to T(\kappa^2) + B$ strongly. With the help of the resolvent identity one deduces that $\left(T^{(j)}(-\kappa^2) + B^{(j)}\right)^{-1} \to \left( T(\kappa^2) + B\right)^{-1}$ strongly. This, together with \eqref{eq:conva}, implies \eqref{eq:convt}.
\end{proof}

Our second application of \eqref{eq:res} will be to derive an expression for the m-function. Let us recall the definition. 
For later purposes we consider whole-line operators. 
As in Section \ref{sec:reduction} we can associate to each measure $\mu\in \mathcal{M}_{\rm a}^{0}$ on $\R$ a whole line operator $A=A_\mu$ acting as the Laplacian away from $\supp\mu$ on functions satisfying `jump conditions' \eqref{boundaries} for $t_n\in\supp\mu$ and $(\sqrt{b_n}+1)/(\sqrt{b_n}-1)=\mu(\{t_n\})$ (with the same modification as before if $b_n=\infty$). 
This defines a self-adjoint, non-negative operator in $L^2(\R)$. 
Therefore, for any $z\in\C\setminus[0,\infty)$ there exist functions $f_\pm(z;\cdot)$ solving $-f''=zf$ in 
$\R\setminus\supp\mu$, satisfying `jump conditions' \eqref{boundaries} and lying in $L^2$ at $\pm\infty$. 
(For example, choose $f_+(z;t)=\left((A-z)^{-1}g\right)(t)$ where $g$ is supported near $-\infty$ and $t$ is to 
the right of $\supp g$, and continue $f$ to the left.) 
Since $f_\pm$ is defined uniquely only up to a multiplicative constant, it is natural to consider
$$
m_\pm (z;t)=\pm \frac{f'_\pm(z;t)}{f_\pm(z;t)} \,,
$$
the m-functions of $A$. Note that if $\mu(\{0\})=0$ and $t\geq 0$, then $m_\pm(z;t)$ depends only on the restriction of $\mu$ to $\R_+$, and therefore we will also speak of the m-function of $A^+$.

The promised formula is

\begin{corollary}\label{mmatrix}
 Let $\mu\in \mathcal{M}_{\rm a}^{0,+}$ as in \eqref{eq:mu}. Then for all $z=k^2\in\C\setminus[0,\infty)$, $\im k>0$,
\begin{equation}\label{eq:mmatrix}
m_+(k^2;0)=ik + \sum_{n,m} e^{ik(t_n+t_m)}
\begin{pmatrix} 1\\ ik \end{pmatrix} ^T
(T(k^2)+B)^{-1}_{n,m} 
\begin{pmatrix} 1\\ ik \end{pmatrix} \,.
\end{equation}
Here $(T(k^2)+B)^{-1}_{n,m}$ is the $(n,m)$-entry (a $2\times 2$-matrix) of the operator $(T(k^2)+B)^{-1}$.
\end{corollary}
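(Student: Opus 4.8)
The plan is to extract the m-function directly from the resolvent kernel supplied by Lemma \ref{res}. Write $G_z(t,u)$ for the integral kernel of $(A^+-z)^{-1}$. Away from the atoms $A^+$ is an ordinary Schr\"odinger operator, so for $0<t<u<t_1$ the Green's function factorizes as $G_z(t,u) = -\phi(z;t)f_+(z;u)/W$, where $\phi$ is the solution obeying the Dirichlet condition at $0$, $f_+$ is the solution that is $L^2$ at $+\infty$, and $W=W(\phi,f_+)=\phi f_+'-\phi'f_+$ is their (constant) Wronskian. The sign is fixed by the normalization $-\partial_t^2 G_z - zG_z=\delta$, and I would pin it down against the explicit $g_z$ (for the free operator $W(\sin(k\cdot),e^{ik\cdot})=-k$, which reproduces $g_z(t,u)=k^{-1}\sin(kt)e^{iku}$ for $t<u$). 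Since $\phi(z;0)=0$, a single derivative in $t$ at $t=0$ cancels $W=-\phi'(z;0)f_+(z;0)$ against $\phi'(z;0)$, leaving
\begin{equation*}
\partial_t G_z(t,u)\big|_{t=0}=\frac{f_+(z;u)}{f_+(z;0)}\,,\qquad 0<u<t_1\,.
\end{equation*}
A further derivative in $u$ at $u=0$ then yields the extraction identity
\begin{equation*}
m_+(z;0)=\frac{f_+'(z;0)}{f_+(z;0)}=\partial_t\partial_u G_z(t,u)\big|_{t=u=0^+}\,,
\end{equation*}
the limit being taken from the region $0<t<u$.

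Next I would insert the resolvent formula \eqref{eq:res} into this identity. The free term $g_z$ contributes $\partial_t\partial_u g_z|_{0,0}$; a direct computation, valid for small $t,u$ since all atoms lie in $[t_1,\infty)$ with $t_1>0$ so that $|t_n-u|=t_n-u$, gives exactly $ik$, the free m-function and the first term on the right of \eqref{eq:mmatrix}. For the correction term I would identify the kernels of the two outer factors: $\gamma(A^+_0-z)^{-1}$ has, in its $m$-th component, the vector-valued kernel $(g_z(t_m,u),\partial_s g_z(s,u)|_{s=t_m})^T$ in the variable $u$, while $(\gamma(A^+_0-\overline z)^{-1})^*=(A^+_0-z)^{-1}\gamma^*$ has, in its $n$-th component, the row kernel $(g_z(t,t_n),\partial_s g_z(t,s)|_{s=t_n})$ in the variable $t$, using $g_{\overline z}=\overline{g_z}$ and the symmetry $g_z(t,u)=g_z(u,t)$. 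Because the correction factorizes through $(T(z)+B)^{-1}$, the operators $\partial_t$ and $\partial_u$ act separately on the two outer factors, and the same short computations give
\begin{equation*}
\partial_u\begin{pmatrix} g_z(t_m,u)\\ \partial_s g_z(s,u)|_{s=t_m}\end{pmatrix}\Bigg|_{u=0}=e^{ikt_m}\begin{pmatrix}1\\ ik\end{pmatrix}\,,
\end{equation*}
and, by symmetry, $\partial_t(g_z(t,t_n),\partial_s g_z(t,s)|_{s=t_n})|_{t=0}=e^{ikt_n}(1,ik)$. Assembling the three factors reproduces the double sum in \eqref{eq:mmatrix}.

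The step that requires care is the justification of differentiating the series for $G_z$ term by term and exchanging the limit $t,u\to0$ with the sum over $n,m$. This is where the quantitative input enters: for $z$ with $\im k>0$ the vectors $e^{ikt_n}(1,ik)$ decay exponentially in $n$ (since $t_n\geq\varepsilon n$), so they lie in $\mathfrak H$, and $(T(z)+B)^{-1}$ is a bounded operator on $\mathfrak H$ for $z\in\C\setminus[0,\infty)$ (this is implicit in the construction behind Lemma \ref{res}, and a quantitative bound for suitable $z$ is established in proving Proposition \ref{unique}). Consequently the double sum converges absolutely, and since the relevant kernels are smooth for $t,u$ near $0$ (no atom sits there), the termwise differentiation and the interchange of limit and summation are legitimate. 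I expect this convergence bookkeeping, rather than the algebra, to be the only real obstacle; the sign conventions in the Green's function factorization are the one place where an error could silently creep in, which is why I would fix them against the explicit expression for $g_z$ as above.
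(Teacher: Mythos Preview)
Your argument is correct and follows exactly the route the paper takes: the paper's proof consists of the single identity $m_+(z;0)=\partial_t\partial_u (A^+-z)^{-1}(t,u)\big|_{(t,u)=(0,0)}$ together with the words ``this follows from \eqref{eq:res}'', and what you have written is a careful unpacking of precisely those two steps. Your derivation of the extraction identity, your computation of the free contribution $ik$ and of the vectors $e^{ikt_n}(1,ik)^T$, and your remarks on the convergence of the double sum are all accurate and simply make explicit what the paper leaves to the reader.
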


\begin{proof}
Since
$$
m_+(z;0) = \frac{\partial^2}{\partial t \partial u} (A^+-z)^{-1} (t,u) |_{(t,u)=(0,0)} \,,
$$
this follows from \eqref{eq:res}.
\end{proof}



\subsection{Uniqueness}\label{sec:unique}

Our goal in this subsection is to prove that the m-function of $A^+$ uniquely determines the measure $\mu$.

\begin{proposition}\label{unique}
Let $\mu,\tilde\mu \in \mathcal{M}_{\rm a}^{0,+}$ with corresponding operators $A^+=A^+_\mu$, $\tilde A^+=A^+_{\tilde\mu}$. Assume that the corresponding $m$-functions satisfy $m_+(z;t)=\tilde m_+(z;t)$ for some $0\leq t<\min\{\inf\supp\mu,\inf\supp\tilde\mu\}$ and all $z\in\C^+$. Then $\mu=\tilde\mu$.
\end{proposition}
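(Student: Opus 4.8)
The plan is to reconstruct the measure atom by atom from the large-$\kappa$ behaviour of $m_+(-\kappa^2;t)$, and then to close by induction. Two evolution laws for the $m$-function drive everything. On any interval free of atoms the logarithmic derivative $m_+ = f_+'/f_+$ of the $L^2$-solution obeys the Riccati equation
\[
\partial_s m_+(z;s) = -z - m_+(z;s)^2 \,,
\]
while the jump conditions \eqref{boundaries} translate into the multiplicative relation
\[
m_+(z;t_n-) = b_n\, m_+(z;t_n+) \,.
\]
Organising $m_+(z;t)$ as a reflection expansion via these two rules (equivalently, reading it off the explicit series of Corollary \ref{mmatrix} after translating the base point to the origin) is the computational backbone. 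Since $A^+_\mu$ is non-negative, $m_+(\,\cdot\,;t)$ continues analytically to $\C\setminus[0,\infty)$, so the hypothesis $m_+=\tilde m_+$ on $\C^+$ propagates to the negative real axis by the identity theorem, and there $m_+$ is finite (the decaying solution $f_+(-\kappa^2;\cdot)$ has no zeros), which is where I run the asymptotics.

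First I would extract the nearest atom. Writing the free solution on $(t,t_1)$, solving the Riccati flow across it and imposing the jump at $t_1$ gives, with $k=i\kappa$ and $\kappa\to+\infty$,
\[
m_+(-\kappa^2;t) = ik + \frac{2(b_1-1)}{b_1+1}\, ik\, e^{2ik(t_1-t)} + \bigl(\text{terms decaying strictly faster}\bigr) \,.
\]
The exponential rate $e^{2ik(t_1-t)}=e^{-2\kappa(t_1-t)}$ fixes $t_1-t$, hence $t_1$, and the amplitude fixes $b_1$; crucially the coefficient $2(b_1-1)/(b_1+1)$ is nonzero because $b_1>1$. Thus $m_+=\tilde m_+$ forces $t_1=\tilde t_1$ and $b_1=\tilde b_1$. (If $\mu$ has no atoms then $m_+\equiv ik$, and the same leading-order comparison shows $\tilde\mu$ has none either.)

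Knowing the pair $(t_1,b_1)$, I would peel it off. The map sending $m_+(z;t_1+)$ to $m_+(z;t)$ is the composition of multiplication by $b_1$ with the $z$-dependent M\"obius transformation implementing the free Riccati flow across $(t,t_1)$; being a non-degenerate M\"obius map composed with multiplication by $b_1\neq 0$, it is injective in its last argument for each fixed $z$. Hence equality of the two outputs together with the now-known common $(t_1,b_1)$ forces $m_+(z;t_1+)=\tilde m_+(z;t_1+)$ for all $z$. But $m_+(z;t_1+)$ is exactly the $m$-function of the operator attached to the truncated measure $\mu|_{(t_1,\infty)}\in\mathcal{M}_{\rm a}^{0,+}$, at base point $t_1<\inf\supp(\mu|_{(t_1,\infty)})$, so the hypotheses of the proposition hold afresh for the truncations. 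Iterating, a finite induction matches $(t_n,b_n)$ with $(\tilde t_n,\tilde b_n)$ for every $n$ (handling as before the case where one measure has only finitely many atoms), whence $\mu=\tilde\mu$.

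The technical heart—and the step I expect to be the main obstacle—is to justify that the displayed single-reflection term really is the leading correction, i.e.\ that the contributions of the remaining atoms and of all multiply-reflected paths decay at a strictly faster rate and cannot alter the coefficient of $e^{2ik(t_1-t)}$. For each fixed large $\kappa$ I would split $T(-\kappa^2)+B$ into its block-diagonal part plus the off-diagonal remainder $R$: since consecutive atoms are $\varepsilon$-separated, the entries of $R$ carry factors $e^{ik|t_n-t_m|}=e^{-\kappa|t_n-t_m|}\le e^{-\kappa\varepsilon|n-m|}$, and a Schur-test estimate gives that $R$ times the inverse of the (invertible) block-diagonal part has operator norm $O(\kappa\, e^{-\kappa\varepsilon})<1$. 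A Neumann series then yields both the invertibility of $T(-\kappa^2)+B$ (the bound invoked in the proof of Proposition \ref{strongresconv}) and a convergent expansion of $(T(-\kappa^2)+B)^{-1}$ in which, because every further reflection costs an extra factor $e^{-2\kappa\varepsilon}$, all terms beyond the first reflection at $t_1$ are $o\!\left(e^{-2\kappa(t_1-t)}\right)$. With this book-keeping the read-off of $(t_1,b_1)$ is rigorous and the induction goes through.
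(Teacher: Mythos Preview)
Your proposal is correct and follows essentially the same route as the paper: extract the first atom $(t_1,b_1)$ from the large-$\kappa$ asymptotics of $m_+(-\kappa^2;t)$ (the paper packages this as a separate Lemma~\ref{asymp}, proved via the very block-diagonal/off-diagonal splitting of $T(-\kappa^2)+B$ and Schur bound you describe), then use that the map $m_+(z;t_1+)\mapsto m_+(z;t)$ is an explicit injective M\"obius transformation to propagate equality past $t_1$, and iterate. The only cosmetic difference is that the paper restarts at an auxiliary point $s\in(t_1,t_2)$ rather than at $t_1+$.
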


This is an analog of the famous Borg-Marchenko result in the 
Schr\"odinger case. 
It has been generalized to perturbations by measures in \cite{ben-amor}, but the result seems to be new for perturbations by boundary conditions \eqref{boundaries}. Our proof below relies on the expression \eqref{eq:mmatrix} from which we will derive

\begin{lemma}\label{asymp}
Let $0\neq \mu \in \mathcal{M}_{\rm a}^{0,+}$ and put $t_1:=\inf\supp\mu$ and $\mu(\{t_1\}) =: (\sqrt{b_1}+1)/(\sqrt{b_n}-1)$. Then for large, real $\kappa$,
\begin{equation}\label{eq:asymp}
m_+(-\kappa^2;0) + \kappa = - \frac{b_1 -1}{b_1 +1} \ 2 \kappa e^{-2\kappa t_1} (1+o(1)) \,.
\end{equation}
\end{lemma}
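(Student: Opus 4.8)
The plan is to insert the explicit expression \eqref{eq:mmatrix} of Corollary~\ref{mmatrix} at the spectral parameter $z=-\kappa^2$ and to extract the leading term of the resulting series as $\kappa\to\infty$. Writing $k=i\kappa$ with $\kappa>0$ we have $ik=-\kappa$, $e^{ik(t_n+t_m)}=e^{-\kappa(t_n+t_m)}$ and $\binom{1}{ik}=\binom{1}{-\kappa}=:v$, so that
$$ m_+(-\kappa^2;0)+\kappa=\sum_{n,m}e^{-\kappa(t_n+t_m)}\,v^{T}\,(T(-\kappa^2)+B)^{-1}_{nm}\,v\,. $$
Since $\mu\in\mathcal M_{\rm a}^{0,+}$ we fix $\epsilon>0$ with $\mu\in\mathcal M_{\rm a}^{\epsilon,+}$, so that $t_1=\inf\supp\mu\geq\epsilon$ and $|t_n-t_m|\geq\epsilon|n-m|$. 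Heuristically the term $n=m=1$ dominates and produces the factor $e^{-2\kappa t_1}$, and the whole point is to justify this and to compute its coefficient.

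The difficulty is that the $2\times2$ blocks of $T(-\kappa^2)+B$ mix the three scales $\kappa^{-1}$ (top-left entry), $1$ (anti-diagonal) and $\kappa$ (bottom-right entry), so $(T(-\kappa^2)+B)^{-1}$ is not directly accessible. I would remove this by conjugation: let $P$ act on $\mathfrak H=\ell(J,\C^2)$ as $\mathrm{diag}(1,\kappa^{-1})$ on each block and set $N:=\kappa\,P(T(-\kappa^2)+B)P$. Then $(T(-\kappa^2)+B)^{-1}=\kappa\,PN^{-1}P$ and $Pv=\binom{1}{-1}=:w$, whence
$$ m_+(-\kappa^2;0)+\kappa=\kappa\sum_{n,m}e^{-\kappa(t_n+t_m)}\,w^{T}N^{-1}_{nm}\,w\,. $$
A direct block computation then shows that $N\to D_\infty$ in operator norm as $\kappa\to\infty$, where $D_\infty$ is block-diagonal with $n$-th block $\tfrac12\left(\begin{smallmatrix}-1&\beta_n\\\beta_n&1\end{smallmatrix}\right)$ and $\beta_n=\mu(\{t_n\})$: the diagonal blocks of $N-D_\infty$ equal $\tfrac12 e^{-2\kappa t_n}\left(\begin{smallmatrix}1&-1\\-1&1\end{smallmatrix}\right)=O(e^{-2\kappa t_1})$, while for $n\neq m$ each entry of $N_{nm}$ is $O(e^{-\kappa|t_n-t_m|})=O(e^{-\kappa\epsilon|n-m|})$, so a Schur test bounds the off-diagonal part by $C\sum_{j\geq1}e^{-\kappa\epsilon j}\to0$. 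Each block of $D_\infty$ is symmetric with eigenvalues $\pm\tfrac12\sqrt{1+\beta_n^2}$ and $\beta_n\geq1$, so $\|D_\infty^{-1}\|\leq\sqrt2$; hence for large $\kappa$ the operator $N$ is invertible, $\|N^{-1}\|\leq C_0$ uniformly, and $N^{-1}_{11}\to(D_\infty^{-1})_{11}$.

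With these facts the leading term is immediate. The contribution of $(n,m)=(1,1)$ equals $\kappa e^{-2\kappa t_1}\,w^{T}N^{-1}_{11}w=\kappa e^{-2\kappa t_1}\big(w^{T}(D_\infty^{-1})_{11}w+o(1)\big)$, and the elementary $2\times2$ computation gives $w^{T}(D_\infty^{-1})_{11}w=-4\beta_1/(1+\beta_1^2)$. Inserting $\beta_1=(\sqrt{b_1}+1)/(\sqrt{b_1}-1)$ and simplifying yields $2\beta_1/(1+\beta_1^2)=(b_1-1)/(b_1+1)$, so the $(1,1)$ term is $-\tfrac{b_1-1}{b_1+1}\,2\kappa e^{-2\kappa t_1}(1+o(1))$, exactly the right-hand side of \eqref{eq:asymp}. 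It remains to show that all other terms are $o(\kappa e^{-2\kappa t_1})$: using $|w^{T}N^{-1}_{nm}w|\leq2\|N^{-1}\|\leq2C_0$ together with $\sum_n e^{-\kappa t_n}=e^{-\kappa t_1}(1+O(e^{-\kappa\epsilon}))$, one gets $\sum_{(n,m)\neq(1,1)}e^{-\kappa(t_n+t_m)}=o(e^{-2\kappa t_1})$, and multiplying by $\kappa$ finishes the estimate.

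The main obstacle is the operator-norm control in the second step — taming the $\kappa$-dependence of the infinite matrix $T(-\kappa^2)+B$. Once the conjugation by $P$ is in place, the rescaled matrix $N$ converges and is uniformly invertible, and everything reduces to the elementary $2\times2$ block computation; the only genuine work is the Schur-test bound on the off-diagonal part and the verification that the diagonal blocks converge to $D_\infty$. I note finally that the bound $\|N^{-1}\|\leq C_0$ produced here depends only on $\epsilon$ and not on the individual measure, so that $\|(T(-\kappa^2)+B)^{-1}\|\leq\kappa\,C_0(\epsilon)$ holds uniformly over $\mathcal M_{\rm a}^{\epsilon,+}$ for fixed $\kappa$; this is precisely the uniform invertibility invoked in the proof of Proposition~\ref{strongresconv}.
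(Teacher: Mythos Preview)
Your proof is correct and is essentially the same as the paper's: both isolate the block-diagonal part of $T(-\kappa^2)+B$, bound the off-diagonal remainder by a Schur test, and then read off the $(1,1)$ block contribution. The only cosmetic difference is that you first conjugate by $P=\mathrm{diag}(1,\kappa^{-1})$ so that the rescaled matrix $N=\kappa P(T(-\kappa^2)+B)P$ converges in norm to a fixed invertible operator $D_\infty$, whereas the paper works directly with the $\kappa$-dependent diagonal $T^0(-\kappa^2)+B$ (which is exactly $\kappa^{-1}P^{-1}D_\infty P^{-1}$) and carries the $O(\kappa)$ bound on its inverse through the estimates.
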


Of course, if $b_1=\infty$ then $\frac{b_1 -1}{b_1 +1}=1$. Accepting Lemma \ref{asymp} for the moment, we turn to the

\begin{proof}[Proof of Proposition \ref{unique}]
By translation invariance we may assume that $t=0$. Hence by Lemma \ref{asymp} either both $\mu$ and $\tilde\mu$ are zero, or else they are both not and then $t_1:=\inf\supp\mu = \inf\supp\tilde\mu$ and $\beta_1:= \mu(\{t_1\}) = \tilde\mu(\{t_1\})$. Now choose $t_1<s<\min\{\inf\supp(\mu-\beta_1\delta_{t_1}), \inf\supp(\tilde\mu-\beta_1\delta_{t_1}) \}$. Solving the equation $-f_+''=zf_+$ explicitly on the interval $[0,s]$ we can write $m_+(z;s)$ as a fractional linear function of $m_+(z;0)$ with coefficients depending only on $ s, z, r_1, \beta_1$. Hence $m_+(z;s)=\tilde m_+(z;s)$ for all $z$. Now iterate.
\end{proof}

\begin{proof}[Proof of Lemma \ref{asymp}]
We shall use the expression for $m(-\kappa^2;0)$ from Corollary \ref{mmatrix}. In order to calculate the asymptotics as $\kappa\to\infty$ we decompose
$$
T(-\kappa^2)=T^0(-\kappa^2) + T^R(-\kappa^2) \,,
\qquad
T^0(-\kappa^2)_{nm} : = \delta_{nm}
\begin{pmatrix}
 -\frac{1}{2\kappa} & 0 \\
 0 & \frac{\kappa}{2}
  \end{pmatrix}\,.
$$
One easily estimates for all large $\kappa$
$$
\| T^R(-\kappa^2)_{n,m} \|_{\C^2\to\C^2} \leq
\left\{
\begin{array}{ll}
\const \kappa e^{- 2 \kappa t_n} & \qquad \text{if} \ n=m\,, \\
\const \kappa e^{- \kappa |t_n-t_m|} & \qquad \text{if} \ n\neq m\,.
\end{array}
\right.
$$
Hence by a matrix-valued version of Schur's lemma
$$
\| T^R(-\kappa^2) \|_{\mathfrak H \to\mathfrak H} \leq
\sup_{m} \sum_n \|T^R(-\kappa^2)_{n,m} \|_{\C^2\to\C^2} \leq
\const \kappa \left( e^{- 2 \kappa t_1} + e^{- \kappa \varepsilon} \right) \,,
$$
where we used that $\varepsilon:= \inf_{n\neq m} |t_n-t_m|>0$ and hence $|t_n-t_m|\geq\varepsilon |n-m|$.

On the other hand, the eigenvalues of $T^0(-\kappa)+B$ are easily calculated and one finds that the smallest (in absolute value) eigenvalue is bounded away from zero by a constant times $\kappa^{-1}$ independently of the $b_n$. (To be a bit more explicit, the positive eigenvalue of  $(T^0(-\kappa)+B)_{nn}$ is larger than $\kappa/2$ and the negative eigenvalue is smaller than $-1/2\kappa$.) Hence both $T^0(-\kappa^2) +B$ and $T(-\kappa^2)+B = T^0(-\kappa^2) + B + T^R(-\kappa^2)$ are invertible, and the norms of their inverses are bounded from above by a constant times $\kappa$. We conclude that
\begin{align*}
& \left\| (T(-\kappa^2)+B)^{-1} - (T^0(-\kappa^2)+B)^{-1} \right\| \\
& \qquad = \left\| (T(-\kappa^2)+B)^{-1} \ T^R(-\kappa^2) \ (T^0(-\kappa^2)+B)^{-1} \right\| \\
& \qquad \leq \const \kappa^3 \left( e^{- 2 \kappa t_1} + e^{- \kappa \varepsilon} \right) \,,
\end{align*}
and so by \eqref{eq:mmatrix}
\begin{align*}
m(\kappa,0) & = -\kappa + \sum_{n} e^{-2\kappa t_n }
\begin{pmatrix} 1\\ -\kappa \end{pmatrix} ^*
(T^0(-\kappa^2)+B)^{-1}_{n,n}
\begin{pmatrix} 1\\ -\kappa \end{pmatrix} \\
& \quad \qquad + \mathcal O \left( \kappa^5 e^{-2\kappa t_1} \left( e^{- 2 \kappa t_1} + e^{- \kappa \varepsilon} \right) \right) \\
& = -\kappa - \frac{b_1 -1}{b_1+1}\ 2 \kappa e^{-2\kappa t_1}
+  \mathcal O \left( \kappa^5 e^{-2\kappa t_1} \left( e^{- 2 \kappa t_1} + e^{- \kappa \varepsilon} \right) \right) \,,
\end{align*}
as claimed.
\end{proof}

\begin{remark*}
Looking at the previous proof, we see that the assumption $\inf_{n\neq m} |t_n-t_m|>0$ can be significantly relaxed to 
$\sum_{n:\,n\neq m} e^{-\kappa |t_n-t_m|} = o(\kappa^{-4})$ uniformly in $m$.
\end{remark*}


\section{Proof of Theorems \ref{delta-operator}, \ref{no-ev} and \ref{wonderland1d}}


\subsection{A Remling-type theorem}

Roughly speaking, Remling's Theorem states that for a given
one-dimensional Schr\"odinger operator, $A$, (with a natural
boundedness assumption on the potential) any right-limit of $A$ is
reflectionless on the absolutely continuous spectrum of $A$. Clearly, the
two central notions behind this theorem are that of right-limit and
that of a reflectionless operator. We proceed to define these notions in our 
setting and formulate the version of Remling's Theorem that will be useful for us.

\begin{definition} \label{right-limit}
Let $\mu, \hat{\mu} \in \mathcal{M}_{\rm a}^{0}$. The measure $\hat\mu$ is said to be a \emph{right-limit} of $\mu$ if there exists a strictly increasing sequence, $s_j \rightarrow \infty$, such that for every continuous,
compactly supported function $f$ on $\R$,
\beq \no
\lim_{j \rightarrow \infty}\int_{\bbR}f(t-s_j)d\mu(t)=\int_{\bbR}f(t)d\hat{\mu}(t).
\eeq
\end{definition}

Let $\mu\in \mathcal{M}_{\rm a}^{0,+}$ with $\mu(\{0\})=0$ and let $A$ and $A^+$ be the corresponding whole-line and half-line operators. Recall that we have introduced the m-functions $m_\pm(z;t)$ before Corollary \ref{mmatrix}. Of course, $m_+(z;0)$ depends only on the restriction of $\mu$ to $[0,\infty)$. Since $m_+(z;0)$ is a Herglotz
function of $z\in\C^+$, its boundary values on the real line exist a.e. We denote by $\Sigma_{\rm ac}(A^+)$ the set 
\beq \no 
\Sigma_{\rm ac}(A^+)=\{E \in \bbR \mid 0<\Im m_+(E+i0;0)<\infty\}.
\eeq
$\Sigma_{\rm ac}(A^+)$ is an essential support of the
absolutely continuous spectrum of $A^+$. In particular,
$A^+$ has absolutely continuous spectrum iff $\Sigma_{\rm
ac}(A^+)$ has positive Lebesgue measure (see e.g.\ \cite{rankone}).

\begin{definition} \label{reflectionless}
Fix $\Lambda \subseteq \bbR$ and let $\mu \in \mathcal{M}_{\rm a}^{0}$.
The operator $A_{\mu}$ is called \emph{reflectionless} on $\Lambda$ if for all $t\in \R\setminus\supp\mu$ and for almost 
every $E \in \Lambda$, $m_+(E+i0;t)=-\bar{m}_-(E+i0;t)$.
\end{definition}

We are now in a position to formulate the version of Remling's Theorem
appropriate for our setting.
\begin{theorem}\label{BPR}
Let $A^+=A^+_\mu$ for some $\mu\in \mathcal{M}_{\rm a}^{0,+}$ and let $\hat{\mu}$ be a right-limit of $\mu$. Then 
$A_{\hat{\mu}}$ is reflectionless on $\Sigma_{\rm ac}(A^+)$.
\end{theorem}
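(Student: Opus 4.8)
\textit{The plan} is to transcribe Remling's argument \cite{remling-disc,remling-cont}, which rests on the value-distribution theorem of Breimesser and Pearson \cite{bp1,bp2}, into the present setting, using that our operators act as the free Laplacian off the discrete set $\supp\hat\mu$ so that the only nonstandard ingredient is the family of jump conditions \eqref{boundaries}. I would first reduce to proving the reflectionless identity at a single base point; since both reflectionlessness and the right-limit property are translation invariant and $\supp\hat\mu$ is $\epsilon$-separated, I may arrange $0\notin\supp\hat\mu$ and prove the identity at $t=0$. The reduction to one point is justified as follows: for $t\notin\supp\hat\mu$ the pair $(m_+(E+i0;t),m_-(E+i0;t))$ is obtained from the pair at $0$ by solving $-f''=Ef$ across $[0,t]$ and inserting the matrices $\mathrm{diag}(\sqrt{b_n},1/\sqrt{b_n})$ dictated by \eqref{boundaries} at the intervening atoms; for real $E$ these transformations have real coefficients and unit determinant, hence preserve the relation $m_+=-\overline{m_-}$. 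Thus it suffices to show $\hat m_+(E+i0;0)=-\overline{\hat m_-(E+i0;0)}$ for a.e. $E\in\Sigma_{\rm ac}(A^+)$, where $\hat m_\pm$ are the m-functions of $A_{\hat\mu}$.

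Let $s_j\to\infty$ realize $\hat\mu$ as a right-limit, so that $\mu(\cdot+s_j)\to\hat\mu$ weakly. Writing $m_+(z;s_j)$ for the right m-function of $A^+$ at $s_j$ and $m_-(z;s_j)$ for the m-function of the truncated problem on $[0,s_j]$ with Dirichlet condition at $0$, I would show that $m_+(z;s_j)\to\hat m_+(z;0)$ and $m_-(z;s_j)\to\hat m_-(z;0)$, locally uniformly on $\C^+$. The first convergence follows from Proposition \ref{strongresconv} applied to the translated measures to the right of $s_j$, which yields strong resolvent convergence and hence convergence of the m-functions at each $z\in\C^+$. The second follows from the analogous strong resolvent convergence of the reflected half-line problems, the Dirichlet endpoint at $0$ receding to $-\infty$ in the translated frame and (by the limit-point character of the operator at $-\infty$) losing its influence in the limit. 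Pointwise convergence of these Herglotz functions upgrades to local uniform convergence by a normal-families argument, and consequently the associated value distributions converge as well.

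The engine of the proof is the Breimesser--Pearson theorem \cite{bp1,bp2}, which in the present notation asserts that for every Borel set $A\subseteq\Sigma_{\rm ac}(A^+)$ of finite measure and every Borel set $S\subseteq\C^+$,
\begin{equation*}
\lim_{j\to\infty}\left|\omega\bigl(A,S;m_+(\cdot;s_j)\bigr)-\omega\bigl(A,S;-\overline{m_-(\cdot;s_j)}\bigr)\right|=0 \,,
\end{equation*}
where $\omega(A,S;F)=\int_A\omega\bigl(F(E+i0),S\bigr)\,dE$ is the value distribution of the boundary function relative to the target $S$. Combined with the convergence of value distributions from the previous step, this gives $\omega(A,S;\hat m_+(\cdot;0))=\omega(A,S;-\overline{\hat m_-(\cdot;0)})$ for every such $A$ and $S$. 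Since this holds for all Borel $A\subseteq\Sigma_{\rm ac}(A^+)$, I may localize in the energy variable: for each $S$ in a countable generating family the two integrands agree for a.e. $E$, so the harmonic measures $\omega(\hat m_+(E+i0;0),\cdot)$ and $\omega(-\overline{\hat m_-(E+i0;0)},\cdot)$ coincide for a.e.\ $E\in\Sigma_{\rm ac}(A^+)$. As harmonic measure determines its interior base point, $\hat m_+(E+i0;0)=-\overline{\hat m_-(E+i0;0)}$ a.e.\ on $\Sigma_{\rm ac}(A^+)$, which by the first step is the asserted reflectionlessness.

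\textit{The main obstacle} is that Breimesser--Pearson was established for Schr\"odinger and Jacobi operators, so the theorem cannot be quoted verbatim and its proof must be re-examined for operators carrying the singular interactions \eqref{boundaries}. The encouraging point is that the argument is local and driven by transfer matrices: on each free interval $(t_n,t_{n+1})$ the transfer matrix is exactly the free one, and at each atom it is composed with the real, unimodular factor $\mathrm{diag}(\sqrt{b_n},1/\sqrt{b_n})$ dictated by \eqref{boundaries}. Because these factors are real for real $E$ and of determinant one, the geometric mechanism behind the value-distribution estimate---the action of transfer matrices on Herglotz functions and the resulting contraction of value distributions on the absolutely continuous spectrum, read off through $0<\Im m_+<\infty$ as recorded before Definition \ref{reflectionless}---survives unchanged. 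Carrying out these estimates in the point-interaction setting, and supplying the reflected-problem convergence invoked above, is where the real work lies.
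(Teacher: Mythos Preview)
Your proposal is correct and follows essentially the same Remling/Breimesser--Pearson strategy as the paper. The paper is slightly more economical: rather than routing the m-function convergence through Proposition~\ref{strongresconv} and a separate limit-point argument for the receding Dirichlet endpoint, it observes that Green's formula continues to hold across the jump points \eqref{boundaries} (by telescoping), and this single observation simultaneously powers the Weyl nested-disk construction (giving locally uniform convergence of both $m_+(z;s_j)$ and $-v'(z;s_j)/v(z;s_j)$ directly, with the shrinking disks automatically absorbing the endpoint effect) and the one model-dependent step in \cite{bp1} (Lemma~3 there). Your transfer-matrix viewpoint (real, unit-determinant factors at the atoms) is of course equivalent---the unimodularity is exactly the Wronskian constancy coming from Green's formula---so the difference is one of packaging rather than substance.
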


\begin{proof}[Proof of Theorem \ref{BPR}]
The proof of this theorem is essentially the same as the proof of
Theorem 4.1 in \cite{remling-cont}, but let us make a few remarks.
As stated in the introduction, Remling's Theorem follows from a
result of Breimesser-Pearson (Theorem 1 in \cite{bp1}) which states
that, on $\Sigma_{\rm ac}(A^+_\mu)$, the value distribution of
$-\frac{v'(E+i0;t)}{v(E+i0;t)}$ and $-\bar{m}_+(E+i0;t)$ are
asymptotically equal (as $t \rightarrow \infty$). Here $v(z;t)$ is
the Dirichlet solution to the formal equation $A^+_\mu v=zv$
(generally, $v \notin L^2$), and $m_+(z;t)$ is the m-function for
$A^+_\mu$. (For the concept of value distribution see
\cite{bp1,bp2}.)

Remling's Theorem follows from this if we can prove that if $\hat{\mu}$ is a right-limit of $\mu$ then
$\lim_{j \rightarrow \infty} m_+(z;s_j)= \hat{m}_+(z;0)$ and
$\lim_{j \rightarrow \infty} -\frac{v'(z;s_j)}{v(z;s_j)}=\hat{m}_-(z;0)$ uniformly on compact subsets of
$\bbC^+$ where $\{s_j\}$ is the sequence from Definition \ref{right-limit} and $\hat{m}_\pm$ are the m-functions
for $A_{\hat{\mu}}$.

To show this, consider $\hat{m}_+(z;0)$ and $A^+_{\hat{\mu}}$,
the right half-line restriction of $A_{\hat{\mu}}$. First, note
that Green's formula (see \cite{cod-lev}): \beq \no \int_{t_1}^{t_2}
(-f''(t))\bar{g}(t)dt-\int_{t_1}^{t_2}f(t)(-\bar{g}''(t))dt
=W\left(f,\bar{g} \right)(t_2)-W\left(f,\bar{g} \right)(t_1) \eeq
holds in our case, by integration by parts along intervals which
contain no atoms of $\hat{\mu}$ and then by summing up the resulting
telescoping sum. Therefore, the Weyl nested disk construction (see,
e.g., \cite{cod-lev}) works in our setting to show that, for any
$\delta>0$ there exists $N>0$ such that, if $\ti{\mu}$ agrees with
$\hat{\mu}$ on $(0,N)$, then $\ti{m}_+(z;0)$, the m-function for
$A^+_{\ti{\mu}}$, lies in a disk of radius no bigger than
$\delta$ which also contains $\hat{m}_+(z;0)$. Explicitly, this disk
is the image of $\bbC^+\cup \bbR$ under the M\"obius transformation
given by the matrix $T_z(N)^{-1}$, where $T_z(N)$ is defined by
$T_z(N) \left( \begin{array}{c} f(0) \\ f'(0)
\end{array} \right)= \left( \begin{array}{c} f(N) \\ f'(N)
\end{array} \right)$ for any formal solution, $f$, of
$A^+_{\hat{\mu}}f=zf$. Using \eqref{boundaries} for the atoms
and the solutions of the free equation along the intervals between
them, it is possible to write this matrix as a product of simple
matrices, and so note that its entries are continuous functions of
the parameters defining the restriction of $\hat{\mu}$ to $(0,N)$.
Thus, the center and radius of the disk are continuous functions of
these parameters. Recalling the definition of right-limit, this
implies the convergence of $m_+(z;s_j)$ to $\hat{m}_+(z;0)$. The
convergence of $-\frac{v'(z;s_j)}{v(z;s_j)}$ to $\hat{m}_-(z;0)$ is
established in a similar way, by considering $A^-_{\hat{\mu}}$, the 
restriction of $A_{\hat{\mu}}$ to $(-\infty,0)$ with a Dirichlet boundary condition.

As for the applicability of Theorem 1 of Breimesser-Pearson
\cite{bp1}, the only thing that actually depends on the particular
properties of the model is Lemma 3 there and its corollary. Once
again, since Green's formula holds in our case it is easy to see
that the proof goes through with no change.
\end{proof}


\subsection{Proof of Theorem \ref{delta-operator}}
Assume, by contradiction, that $\limsup_{n \rightarrow \infty} (t_{n+1}-t_n)=\infty$ and that $\Sigma_{\rm ac}(A^+)$ has positive Lebesgue measure. Note that $A^+$ is non-negative, so $\Sigma_{\rm ac}(A^+) \subset [0,\infty)$.
Let $\mu=\sum_{n=1}^\infty \beta_n \delta_{t_n}$.
By the hypothesis on $t_n$ and a compactness argument, $\mu$ has a right-limit $\hat{\mu}\in\mathcal M_{\mathrm a}^{0}$ such that
$\hat{\mu}(-\infty,0]=0$ while $\hat{\mu}(0,\infty) \neq 0$.

Now, Theorem \ref{BPR} implies that $A_{\hat{\mu}}$ is reflectionless on $\Sigma_{\rm ac}(A^+)$. 
Hence $\hat m_+(E+i0;t)=-\bar{\hat m}_-(E+i0;t)=i\sqrt{E}$ for all $E\in \Sigma_{\rm ac}(A^+)$ and all 
$t\notin\supp\hat\mu$. 
Since a Herglotz function is uniquely determined by its values on a set of positive measure (see, e.g.\ 
\cite[Appendix B]{teschl}), 
one has $\hat m_+(k^2;t)=ik$ for all $k$ with $\im k>0$ and all $t\notin\supp\hat\mu$, and hence by 
Proposition \ref{unique}, $\hat\mu(0,\infty)=0$, a contradiction.
\hfill $\Box$


\subsection{Proof of Theorem \ref{no-ev}}
First note that in view of \eqref{eq:apos}, $0$ is not an eigenvalue of $A^+$. Hence we need to show that any function $f$ satisfying $f(0)=0$ and \eqref{boundaries} and solving
\beq \label{evequation}
-f''(x)=E f(x) \qquad \text{in} \ \R_+\setminus\{t_n\}_{n=1}^\infty
\eeq
for some $E\in(0,\infty)$ is not in $L^2$.

Any solution of \eqref{evequation} satisfies
\beq \no
\left( \begin{array}{c} f(x) \\ f'(x) \end{array} \right)=T(x,y,E)
\left( \begin{array}{c} f(y) \\ f'(y) \end{array} \right)
\eeq
for a certain $2 \times 2$ matrix $T(x,y,E)$ of determinant $1$. It follows from \cite[Thm.~2.1]{simon-stolz} that if
\beq \label{transfer-bound}
\int_0^\infty \frac{dx}{ \parallel T(x,O,E) \parallel^2}=\infty
\eeq
then \eqref{evequation} has no solution in $L^2(0,\infty)$.

If $x, y \in (t_n,t_{n+1})$ for some fixed $n$ then
\beq \no
\left( \begin{array}{c} f(x) \\ f'(x) \end{array} \right)=
\left(\begin{array}{cc} \cos k (x-y) & k^{-1} \sin k(x-y) \\ 
-k \sin k(x-y) & \cos k (x-y) \end{array} \right)
\left( \begin{array}{c} f(y) \\ f'(y) \end{array} \right)
\eeq
where $k=\sqrt{E}>0$. The norm of this matrix is bounded by 
$\max(k,k^{-1})$. Furthermore, the jump condition is taken into account by the 
jump matrix
\beq \no
S_n=\left( \begin{array}{cc} \sqrt{b_n} & 0 \\
0 & \frac{1}{\sqrt{b_n}} \end{array}\right).
\eeq
Thus, if $x \in (t_n,t_{n+1})$,
\beq \no
\parallel T(x,O,E) \parallel \leq \left(\max(k,k^{-1}) \right)^{n+1} \prod_{m=1}^n\sqrt{b_m} 
\leq n^n,
\eeq
for sufficiently large $n$ (and any fixed $E \in (0, \infty)$).
This implies
\beq \no
\int_{t_n}^{t_{n+1}} \frac{dx}{\parallel T(x,O,E) \parallel^2}\geq \frac{t_{n+1}-t_n}{n^{2n}},
\eeq
which, by \eqref{sparseness}, implies the result.
\hfill $\Box$

\subsection{Proof of Theorem \ref{wonderland1d}}
It is easy to see (cf. also \cite{remling-cont}) that $\mathcal{M}^{\varepsilon,C,+}_\mathrm a$ is a complete 
(indeed, compact) metric space where the topology coincides with that of weak convergence of measures. 
According to Proposition \ref{strongresconv} weak convergence of measures implies convergence in the strong resolvent 
sense for the corresponding operators. 
Thus, it follows from \cite[Thm. 1.1]{simon-wonderland} that the 
set of operators in $\mathcal{M}^{\varepsilon,C,+}_\mathrm a$ with no eigenvalues in $[0,\infty)$ is a $G_\delta$ set. 
Moreover, \cite[Thm. 1.2]{simon-wonderland} implies that the set of operators in $\mathcal{M}^{\varepsilon,C,+}_\mathrm a$ 
with no absolutely continuous spectrum in $(0,\infty)$ (and so also in $[0,\infty)$) is a $G_\delta$ set.

To complete the proof it suffices to note that the set of measures in 
$\mathcal{M}^{\varepsilon,C,+}_\mathrm a$ with $\{t_n\}$ satisfying \eqref{sparseness} is a dense set 
(since, for any given measure one may take a measure coinciding with it on a fixed bounded set, 
but satisfying condition \eqref{sparseness}), and for such measures the spectrum of the corresponding operator is purely 
singular continuous by Theorem \ref{no-ev}.
\hfill $\Box$


\bibliographystyle{amsalpha}

\end{document}